\definecolor{grn}{rgb}{0,0.6,0}
\definecolor{mrn}{rgb}{0.3,0,0}
\definecolor{blue}{rgb}{0,0,0.7}
\definecolor{Mygray}{rgb}{0.75,0.75,0.75}
\definecolor{auburn}{rgb}{0.43, 0.21, 0.1}
\definecolor{britishracinggreen}{rgb}{0.0, 0.26, 0.15}
\definecolor{taupe}{rgb}{0.28, 0.24, 0.2}
\newtheorem{theorem}{Theorem}[section]
\newtheorem{propn}{Proposition}[section]
\newtheorem{defn}{Definition}[section]
\newtheorem{quest}{Question}[section]
\newtheorem{rmk}{Remark}[section]
\newtheorem{conj}{Conjecture}[section]
\begin{document}
\baselineskip=14.5pt
\title[A brief survey of recent results on P\'{o}lya groups]{A brief survey of recent results on P\'{o}lya groups}

\author{Jaitra Chattopadhyay and Anupam Saikia}
\address[Jaitra Chattopadhyay and Anupam Saikia]{Department of Mathematics, Indian Institute of Technology Guwahati, Guwahati - 781039, Assam, India}
\email[Jaitra Chattopadhyay]{jaitra@iitg.ac.in; chat.jaitra@gmail.com}

\email[Anupam Saikia]{a.saikia@iitg.ac.in}

\begin{abstract}
The P\'{o}lya group of an algebraic number field is a particular subgroup of the ideal class group. This article provides an overview of recent results on P\'{o}lya groups of number fields, their connection with the ring of integer-valued polynomials and touches upon some results on number fields having large P\'{o}lya groups. For the sake of completeness, we have included the proof of Zantema's theorem which laid the foundation to determine the P\'{o}lya groups of many finite Galois extensions over $\mathbb{Q}$. Towards the end of the article, we provide an elementary proof of a weaker version of a recent result of Cherubini et al.
\end{abstract}

\renewcommand{\thefootnote}{}

\footnote{2020 \emph{Mathematics Subject Classification}: Primary 11R29, Secondary 11R11.}

\footnote{\emph{Key words and phrases}: Divisibility of class numbers, P\'{o}lya fields, P\'{o}lya groups, Integer-valued polynomials.}

\footnote{\emph{We confirm that all the data are included in the article.}}

\renewcommand{\thefootnote}{\arabic{footnote}}
\setcounter{footnote}{0}

\maketitle

\section{Introduction}

For an algebraic number field $K$ of degree $d \geq 1$ with ring of integers $\mathcal{O}_{K}$ and discriminant $d_{K}$, let $Cl_{K}$ be the ideal class group of $K$ and let $h_{K}$ be the class number. The class group $Cl_{K}$ is an important object of investigation to algebraic number theorists, particularly because of the arithmetic information about $K$ it carries. Even though the class number $h_{K}$ is finite for all number field $K$, it is quite a difficult task to determine number fields with a given class number. In this regard, Gauss asked for a classification of imaginary quadratic fields of class number $1$, which was completely characterized in the later part of the last century by the seminal works of Baker, Heegner and Stark (cf. \cite{baker}, \cite{stark}). However, the {\it class number $1$ conjecture} of Gauss that predicts the existence of infinitely many real quadratic fields with class number $1$, still remains far from being resolved.

\smallskip

On the other hand, a slightly weaker question of the divisibility of class numbers by a given integer has an affirmative answer. In \cite{nagell} (and also later in \cite{AC}), it was proven that there exist infinitely many imaginary quadratic fields whose class numbers are all divisible by a given integer $g$. Later, in \cite{moto} (also in \cite{berger}), the analogue for real quadratic fields was established. Since then, several properties of class groups and class numbers of number fields of varied degrees have extensively been studied. The interested reader is encouraged to look at \cite{byeon}, \cite{byeon-koh}, \cite{kalyan}, \cite{kishi}, \cite{luca}, \cite{sound} and the references listed therein. Recently, Bhand and Murty \cite{bhand} have written an elaborate article on the class numbers of quadratic fields and it gives a very nice overview of the recent results on this theme.

\smallskip

In the present article, we are interested in a particular subgroup $Po(K)$ of $Cl_{K}$, known as the {\it P\'{o}lya group} of $K$. It stems from the study of a seemingly unrelated object called the ring of {\it integer-valued polynomials} of $K$, which is defined as ${\rm{Int}}(\mathcal{O}_{K}) := \{f(X) \in K[X] : f(\mathcal{O}_{K}) \subseteq \mathcal{O}_{K}\}$. Not only does ${\rm{Int}}(\mathcal{O}_{K})$ form a ring under point-wise addition and multiplication, but also it can be endowed with an $\mathcal{O}_{K}$-module structure by defining $(\alpha,f(X)) \mapsto \alpha f(X)$ for $\alpha \in \mathcal{O}_{K}$ and $f(X) \in {\rm{Int}}(\mathcal{O}_{K})$. In order to get the reader acquainted with the algebraic properties of ${\rm{Int}}(\mathcal{O}_{K})$, let us quickly make some fundamental observations about the case $K = \mathbb{Q}$ closely.

\smallskip

By definition, ${\rm{Int}}(\mathbb{Z})$ consists of polynomials $f(X) \in \mathbb{Q}[X]$ such that $f(t) \in \mathbb{Z}$ for all $t \in \mathbb{Z}$. For example, $\frac{X(X - 1)}{2}$, $\frac{X(X - 1)(X - 2)}{6}$ are elements of ${\rm{Int}}(\mathbb{Z})$. An inductive argument (cf. \cite[Proposition I.1.1.]{cahen-chabert-book}) leads to the fact that any element of ${\rm{Int}}(\mathbb{Z})$ can be uniquely expressed as a $\mathbb{Z}$-linear combination of finitely many $f_{i}$'s, where $f_{i}(X) := \frac{X(X - 1)\cdots (X - i +1)}{i!}$ for $i \geq 1$ and $f_{0}(X) := 1$. In other words, ${\rm{Int}}(\mathbb{Z})$ has a free $\mathbb{Z}$-basis consisting of polynomials of each degree. Such a basis is often referred to as a {\it regular basis}. This property can be used to define {\it P\'{o}lya fields} as follows.

\begin{defn} \cite[Definition II.4.1]{cahen-chabert-book}
A number field $K$ is called a P\'{o}lya field if ${\rm{Int}}(\mathcal{O}_{K})$ admits a regular basis.
\end{defn}

The behaviour of ${\rm{Int}}(\mathcal{O}_{K})$ depends heavily on certain ideal classes of the class group $Cl_{K}$. The following notion of {\it P\'{o}lya groups} facilitates us to bridge a connection.

\begin{defn} (cf. \cite{cahen-chabert-book}, \textsection II.4)
The P\'{o}lya group $Po(K)$ of $K$ is defined to be the subgroup of $Cl_{K}$ generated by the ideal classes $[\Pi_{q}(K)]$, where 
\begin{equation}
\displaystyle\Pi_{q}(K) = \left\{\begin{array}{ll}
\displaystyle\prod_{\substack {\mathfrak{p} \in {\rm{Spec}}(\mathcal{O}_{K})\\ N_{K/\mathbb{Q}} \mathfrak{p} = q}}\mathfrak{p} &\mbox{; if } \mathcal{O}_{K} \mbox{ has primes } \mathfrak{p} \mbox{ of norm } q,\\
\mathcal{O}_{K} &\mbox{; otherwise}.
\end{array}\right.
\end{equation}
\end{defn}

Now that we have introduced both P\'{o}lya fields and P\'{o}lya groups, one might wonder as to how these two are related. It is known that $K$ is a P\'{o}lya field if and only if the P\'{o}lya group $Po(K)$ is trivial. Thus the algebraic structure of the $\mathcal{O}_{K}$-module ${\rm{Int}}(\mathcal{O}_{K})$ is governed, to a large extent, by the structure of $Po(K)$. In other words, the P\'{o}lya group is a measure of failure of a number field from being a P\'{o}lya field. To gather more information about the algebraic properties of the ring of integer-valued polynomials, we direct the reader to look at the informative article \cite{cahen-chabert-monthly}.

\smallskip

After the introduction of P\'{o}lya fields, it is natural to look for number fields that are P\'{o}lya fields. In other words, we look for number fields with trivial P\'{o}lya groups. Indeed, any field of class number $1$ is a P\'{o}lya field as $Po(K) \subseteq Cl_{K}$. Since it is unknown whether there exist infinitely many number fields $K$ with $h_{K} = 1$, the mere inclusion $Po(K) \subseteq Cl_{K}$ of sets is insufficient to conclude the infinitude of number fields that are P\'{o}lya. It is desirable to have an infinite family of P\'{o}lya number fields and towards this direction, it is known due to the work of Zantema \cite[Proposition 2.6]{zantema} that cyclotomic fields are P\'{o}lya fields. Along a similar line, Leriche \cite[Corollary 3.2]{ler-em} proved that the Hilbert class field $H_{K}$ of any number field $K$ is a P\'{o}lya field. Leriche's motivation to prove this originated from the work of Golod and Shafarevich \cite{golod} where they proved that there exist number fields that cannot be embedded inside a number field of class number $1$. Leriche's theorem shows that every number field can be embedded inside a P\'{o}lya field, viz. its Hilbert class field.

\section{Some families of P\'{o}lya fields of small degrees}

Due to the results of Leriche and Zantema, there are plenty of number fields that are P\'{o}lya. In this section, we record some quadratic, cubic and bi-quadratic P\'{o}lya fields. But before we do so, let us recall the following result of Hilbert which connects the order of the P\'{o}lya group of a quadratic field $K$ to the number of ramified  primes in $K/\mathbb{Q}$ and the sign of the fundamental unit. 

\begin{propn} (cf. \cite{hilbert}, \cite[Proposition 1.4]{cahen-chabert-book})\label{hilb}
Let $K = \mathbb{Q}(\sqrt{d})$ be a quadratic field and let $r_{K}$ denote the number of ramified primes in $K/\mathbb{Q}$. Then 
\begin{equation}
|Po(K)| = \left\{\begin{array}{ll}
2^{r_{K} - 2} &\mbox{; if } d > 0 \mbox{ and } N_{K/\mathbb{Q}}(\mathcal{O}_{K}^{*}) = \{1\},\\
2^{r_{K} - 1} &\mbox{; otherwise}.
\end{array}\right.
\end{equation}
\end{propn}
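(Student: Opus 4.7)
The plan is to reduce the computation of $|Po(K)|$ to a unit-index calculation via Hilbert's Theorem 90. First, I identify the generators: for a split prime $p\mathcal{O}_K = \mathfrak{p}\bar\mathfrak{p}$ one has $\Pi_p(K) = (p)$; for an inert prime $p\mathcal{O}_K = \mathfrak{p}$ one has $\Pi_{p^2}(K) = (p)$, principal again; and for a ramified prime, $\Pi_p(K) = \mathfrak{p}$ satisfies $\mathfrak{p}^2 = (p)$. Hence $Po(K) = \langle [\mathfrak{p}_1],\dots,[\mathfrak{p}_{r_K}]\rangle$ is generated by the classes of the ramified primes, each of order dividing $2$, and there is a natural surjection
\[
\phi:(\mathbb{F}_2)^{r_K}\twoheadrightarrow Po(K),\qquad (a_i)\longmapsto \Bigl[\prod_i \mathfrak{p}_i^{a_i}\Bigr],
\]
reducing the problem to computing $|\ker\phi|$.

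The core step is to exhibit a natural isomorphism $\ker\phi\cong U_1/U_0$, where $U_1=\{u\in\mathcal{O}_K^\times: N_{K/\mathbb{Q}}(u)=1\}$ and $U_0=\{\sigma\beta/\beta:\beta\in\mathcal{O}_K^\times\}$, with $\sigma$ the non-trivial Galois automorphism of $K/\mathbb{Q}$. Given $(a_i)\in\ker\phi$ with $\prod\mathfrak{p}_i^{a_i}=(\alpha)$, the ideal $(\alpha)$ is $\sigma$-stable, so $\sigma\alpha/\alpha$ is a unit $u$ with $N(u)=1$; the class of $u$ modulo $U_0$ is independent of the choice of $\alpha$. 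Injectivity is a short computation: if two tuples yield the same class, then $\alpha/\alpha'$ lies in $\mathbb{Q}^\times$ up to a unit, forcing the tuples to agree modulo $2$ after comparing valuations at each $\mathfrak{p}_i$. Surjectivity is where Hilbert 90 enters: for $u\in U_1$, write $u=\sigma\alpha/\alpha$ with $\alpha\in K^\times$; then $(\alpha)$ is $\sigma$-invariant and hence of the form $\prod\mathfrak{p}_i^{a_i}\cdot (r)$ for some $r\in\mathbb{Q}^\times$, and the tuple $(a_i\bmod 2)$ is a preimage.

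All that remains is an explicit computation of $|U_1/U_0|$ in each case. For imaginary $K$, $\mathcal{O}_K^\times=\mu_K$ is cyclic of even order, every unit has norm $1$, and $U_0$ equals the subgroup of squares, yielding $|U_1/U_0|=2$ and hence $|Po(K)|=2^{r_K-1}$. For real $K=\mathbb{Q}(\sqrt{d})$ with fundamental unit $\epsilon$, write $\mathcal{O}_K^\times=\langle -1\rangle\times\langle\epsilon\rangle$: if $N(\epsilon)=-1$, a direct calculation in this basis shows $U_0$ has index $2$ in $U_1=\{\pm\epsilon^{2k}\}$, giving $|Po(K)|=2^{r_K-1}$; if $N(\epsilon)=+1$, then $U_1=\mathcal{O}_K^\times$ and $U_0=\langle\epsilon^2\rangle$, so $|U_1/U_0|=4$ and $|Po(K)|=2^{r_K-2}$. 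The main obstacle is the surjectivity step using Hilbert 90; the rest is bookkeeping with units and valuations. An alternative is to recognize $Po(K)$ as the ambiguous class group $Cl_K^{\mathrm{Gal}(K/\mathbb{Q})}$ and invoke Chevalley's ambiguous class number formula directly, but the calculation above makes the role of the unit norm more transparent.
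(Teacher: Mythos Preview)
The paper does not actually prove this proposition; it is stated with references to Hilbert and Cahen--Chabert and then used as a black box. What the paper \emph{does} prove is Zantema's exact sequence (Theorem~\ref{zmt})
\[
0 \longrightarrow H^{1}(G,\mathcal{O}_{K}^{*}) \longrightarrow \bigoplus_{i=1}^{t}\mathbb{Z}/e_{i}\mathbb{Z} \longrightarrow Po(K) \longrightarrow 0,
\]
and your argument is exactly the specialization of that proof to the quadratic case, carried out by hand. Your surjection $\phi:(\mathbb{F}_{2})^{r_{K}}\twoheadrightarrow Po(K)$ is the right-hand map of the sequence (all $e_{i}=2$), and your quotient $U_{1}/U_{0}=\ker N/\operatorname{im}(\sigma-1)$ is precisely $H^{1}(G,\mathcal{O}_{K}^{*})$ for the cyclic group $G$ of order~$2$. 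Your surjectivity step via Hilbert~90 parallels the paper's derivation of $\mathcal{P}(K)^{G}/(\mathbb{Q}^{*}/\{\pm 1\})\simeq H^{1}(G,\mathcal{O}_{K}^{*})$, and your identification of $G$-invariant ideals with products of ramified primes times a rational ideal is the quadratic instance of the paper's description of $I(K)^{G}$. The explicit computation of $|U_{1}/U_{0}|$ in the three cases (imaginary; real with $N(\epsilon)=-1$; real with $N(\epsilon)=+1$) is correct and is the extra ingredient needed beyond the general exact sequence to obtain the stated formula.

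One small caveat on your closing remark: $Po(K)$ coincides with the group of classes of \emph{ambiguous ideals} (strongly ambiguous classes), which for a quadratic field is indeed generated by the ramified primes; but this can be a proper subgroup of the ambiguous class group $Cl_{K}^{G}$, so invoking Chevalley's formula directly would require that extra identification. This does not affect your main argument, which is correct.
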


It is clear from Proposition \ref{hilb} that if $K = \mathbb{Q}(\sqrt{d})$ is a P\'{o}lya field, then $d$ cannot have more than two prime factors. Zantema exploited this and classified the quadratic P\'{o}lya fields as follows.

\begin{propn} \cite{zantema}\label{quadratic-poly-classification}
The quadratic field $K = \mathbb{Q}(\sqrt{d})$ P\'{o}lya if and only if one of the following conditions holds.
\begin{enumerate}
\item $d = p$, where $p > 0$ is an odd prime number,

\item $d = 2p$, where $p$ is a prime number and either $p \equiv 3 \pmod {4}$ or $p \equiv 1 \pmod {4}$ and the fundamental unit of $K$ has norm $1$,

\item $d = pq$, where $p$ and $q$ are prime numbers with either $p \equiv q \equiv 3 \pmod {4}$ or $p \equiv q \equiv 1 \pmod {4}$ and the fundamental unit of $K$ has norm $1$,

\item $d = -1, -2$ or $2$,

\item $d = -p$, where $p \equiv 3 \pmod {4}$ is a prime number.
\end{enumerate}
\end{propn}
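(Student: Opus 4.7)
The plan is to translate the condition $|Po(K)| = 1$ via Proposition~\ref{hilb} into an explicit arithmetic condition on the squarefree integer $d$, and then to enumerate the solutions. First I would record the dependence of $r_K$ on $d$: since the discriminant is $d_K = d$ when $d \equiv 1 \pmod{4}$ and $d_K = 4d$ otherwise, and a rational prime ramifies in $K$ exactly when it divides $d_K$, writing $\omega(d)$ for the number of distinct prime factors of $|d|$ we have $r_K = \omega(d)$ when $d \equiv 1 \pmod 4$ or $d$ is even, and $r_K = \omega(d) + 1$ when $d$ is odd with $d \equiv 3 \pmod 4$.

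Next I would apply Proposition~\ref{hilb}: $K$ is a P\'olya field iff either (a) $d > 0$, the fundamental unit $\varepsilon_K$ satisfies $N_{K/\mathbb{Q}}(\varepsilon_K) = +1$, and $r_K = 2$; or (b) we are in the complementary situation (namely $d < 0$, or $d > 0$ with $N_{K/\mathbb{Q}}(\varepsilon_K) = -1$) and $r_K = 1$. In particular $\omega(d) \le 2$, so only a handful of shapes for $d$ remain, and the enumeration separates naturally by the sign of $d$ and by $r_K$.

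For $d < 0$ with $r_K = 1$ one reads off $d \in \{-1, -2\}$ (only the prime $2$ ramifies) together with $d = -p$ where $p \equiv 3 \pmod 4$ is an odd prime (so $-p \equiv 1 \pmod 4$ and only $p$ ramifies), giving the negative parts of items (4) and (5). For $d > 0$ with $r_K = 1$ the possibilities are $d = 2$ and $d = p$ with $p \equiv 1 \pmod 4$, and in both cases $N_{K/\mathbb{Q}}(\varepsilon_K) = -1$ by a classical computation (witnessed by $1 + \sqrt{2}$ in the first case), which is compatible with case (b). For $d > 0$ with $r_K = 2$ the squarefree possibilities are $d = p$ with $p \equiv 3 \pmod 4$, $d = 2p$ with $p$ an odd prime, and $d = pq$ with distinct odd primes $p \equiv q \pmod 4$, producing the remaining parts of (1), (2), (3).

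The last ingredient is the observation that tells us when the requirement $N_{K/\mathbb{Q}}(\varepsilon_K) = +1$ in case (a) is automatic: if $\varepsilon_K$ had norm $-1$, then reducing a norm relation modulo any odd prime $p \mid d$ would exhibit $-1$ as a square in $\mathbb{F}_p$, forcing $p \equiv 1 \pmod 4$. Hence whenever $d$ has an odd prime factor $\equiv 3 \pmod 4$, the equality $N_{K/\mathbb{Q}}(\varepsilon_K) = +1$ holds for free, which is exactly why no explicit norm hypothesis appears in case (1) with $p \equiv 3 \pmod 4$, in case (2) with $p \equiv 3 \pmod 4$, or in case (3) with $p \equiv q \equiv 3 \pmod 4$. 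The main obstacle is the bookkeeping across the three parameters---sign of $d$, residue of $d \pmod 4$, and sign of $N_{K/\mathbb{Q}}(\varepsilon_K)$---and matching the output to the five items in the statement; once $r_K \le 2$ and this descent argument are in hand, the rest is a finite tabulation.
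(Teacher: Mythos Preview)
The paper does not supply its own proof of this proposition; it is quoted from \cite{zantema} and preceded only by the sentence ``Zantema exploited this and classified the quadratic P\'{o}lya fields as follows,'' where ``this'' refers to Proposition~\ref{hilb}. Your proposal carries out exactly that derivation: reduce $|Po(K)|=1$ via Proposition~\ref{hilb} to the two cases $r_K=2$ with $N(\varepsilon_K)=+1$ (real) and $r_K=1$ otherwise, compute $r_K$ from the discriminant, and enumerate the finitely many shapes of squarefree $d$ with $\omega(d)\le 2$. The bookkeeping is correct, including the two nontrivial inputs you flag: that $N(\varepsilon_K)=-1$ is automatic when $d=2$ or $d=p\equiv 1\pmod 4$ (so the $r_K=1$ real cases genuinely fall under case~(b)), and that $N(\varepsilon_K)=+1$ is forced whenever some odd prime divisor of $d$ is $\equiv 3\pmod 4$. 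So your argument is sound and matches the route the paper points to, even though the paper itself omits the details.
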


In \cite{leriche}, Leriche characterized cyclic cubic P\'{o}lya fields. She made use of \cite[Theorem 1]{lem} to deduce that for a cyclic cubic field $K$, the equality $|Po(K)| = 3^{r_{K} - 1}$ holds, where $r_{K}$ stands for the number of ramified primes in $K/\mathbb{Q}$. From this, she proved the following proposition.

\begin{propn} \cite[Proposition 3.2]{leriche}
A cyclic cubic field $K = \mathbb{Q}(\alpha)$ is P\'{o}lya if and only if the minimal polynomial of $\alpha$ over $\mathbb{Q}$ is either $X^{3} - 3X + 1$ or of the form $X^{3} - 3pX - pu$, where $p = \frac{u^{2} + 27w^{2}}{4}$ is a prime number, $w \geq 1$ is an integer and $u$ is an integer with $u \equiv 2 \pmod {3}$.
\end{propn}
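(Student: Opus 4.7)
The plan is to leverage the identity $|Po(K)| = 3^{r_K - 1}$ recalled just before the proposition, which immediately reduces the classification of cyclic cubic Pólya fields to classifying cyclic cubic fields in which exactly one rational prime ramifies. Once such fields are listed by their conductor, the remaining task is merely to write down an explicit generator and minimal polynomial for each.

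First I would invoke the Kronecker--Weber theorem to realize every cyclic cubic field $K/\mathbb{Q}$ as a subfield of some $\mathbb{Q}(\zeta_{f})$, where $f$ is the conductor of $K$. Standard abelian ramification theory says that a rational prime $\ell$ ramifies in $K/\mathbb{Q}$ precisely when $\ell \mid f$, and the existence of a quotient of order $3$ of $(\mathbb{Z}/f\mathbb{Z})^{*}$ forces $f$ to have the shape $9^{\varepsilon} p_{1} p_{2} \cdots p_{s}$ with $\varepsilon \in \{0,1\}$ and each $p_{i}$ a distinct rational prime satisfying $p_{i} \equiv 1 \pmod{3}$. Hence the condition $r_{K} = 1$ holds exactly when $f = 9$ (only $3$ ramifies) or $f = p$ for some prime $p \equiv 1 \pmod{3}$ (only $p$ ramifies).

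Next I would treat the two cases separately. The case $f = 9$ is immediate: the unique cyclic cubic subfield of $\mathbb{Q}(\zeta_{9})$ is the maximal real subfield $\mathbb{Q}(\zeta_{9} + \zeta_{9}^{-1})$, and $\alpha = \zeta_{9} + \zeta_{9}^{-1}$ is directly seen to satisfy $X^{3} - 3X + 1 = 0$. In the case $f = p$ with $p \equiv 1 \pmod{3}$, there is a unique cyclic cubic subfield of $\mathbb{Q}(\zeta_{p})$, generated by the Gauss period $\eta = \sum_{j = 0}^{(p-1)/3 - 1} \zeta_{p}^{g^{3j}}$, where $g$ is a primitive root modulo $p$. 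Computing the elementary symmetric functions of the three Galois conjugates of $\eta$ and applying a linear change of variable produces a minimal polynomial of the form $X^{3} - 3pX - pu$, in which the parameters $u$ and $w \geq 1$ arise from the classical representation $4p = u^{2} + 27w^{2}$ (unique up to signs, with the congruence $u \equiv 2 \pmod{3}$ pinning down the sign). Conversely, for each polynomial in the stated list one verifies irreducibility over $\mathbb{Q}$, cyclicity of the splitting field, and that its discriminant is supported on a single rational prime, which by the reduction above forces $K$ to be Pólya.

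The main technical obstacle is the explicit evaluation of the product of the three conjugate Gauss periods in the conductor-$p$ case, which is precisely where the classical identity $4p = u^{2} + 27w^{2}$ and the congruence $u \equiv 2 \pmod{3}$ enter; this step is traditionally handled via a Jacobi-sum computation and is somewhat intricate, though well documented in the literature. Everything else in the argument — the reduction to $r_{K} = 1$, the conductor classification, and the $f = 9$ case — is structural and short.
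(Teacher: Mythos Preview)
Your proposal is correct and follows precisely the route the paper indicates: the survey does not supply a proof of this proposition but merely records that Leriche deduced it from the identity $|Po(K)|=3^{r_K-1}$, i.e., from the reduction to $r_K=1$. Your expansion of that reduction via conductor theory, the explicit treatment of the cases $f=9$ and $f=p$ with $p\equiv 1\pmod 3$, and the Gauss-period computation leading to $X^{3}-3pX-pu$ with $4p=u^{2}+27w^{2}$ is exactly the standard argument underlying Leriche's result, so there is no genuine divergence in approach.
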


The characterization of P\'{o}lya fields, that are Galois extensions of degree $4$ over $\mathbb{Q}$, falls essentially into two categories, viz. cyclic and bi-quadratic fields. Leriche \cite{leriche} classified the cyclic quartic P\'{o}lya fields as follows.

\begin{theorem} \cite[Theorem 4.4]{leriche}
Let $K = \mathbb{Q}(\sqrt{A(D + B\sqrt{D})})$ be a cyclic quartic field, where $A$ is an odd, square-free integer, $D = B^{2} + C^{2}$ is square-free with $B > 0$, $C > 0$ and $\gcd(A,D) = 1$. Then $K$ is a P\'{o}lya field if and only if one of the following conditions holds.

\begin{enumerate}
\item $K = \mathbb{Q}\left(\sqrt{2 + \sqrt{2}}\right)$ or $K = \mathbb{Q}\left(\sqrt{-(2 + \sqrt{2}})\right)$,

\item $K = \mathbb{Q}\left(\sqrt{q(2 + \sqrt{2}})\right)$, where $q$ is an odd prime and $N_{K/\mathbb{Q}}(\mathcal{O}_{K}^{*}) = \{1\}$,

\item $K = \mathbb{Q}\left(\sqrt{p + B\sqrt{p}}\right)$, where $p \equiv 1 \pmod {4}$ is prime, $B \equiv 0 \pmod {4}$ and $p = B^{2} + C^{2}$,

\item $K = \mathbb{Q}\left(\sqrt{-(p + B\sqrt{p}})\right)$, where $p \equiv 1 \pmod {4}$ is prime, $B \equiv 2 \pmod {4}$ and $p = B^{2} + C^{2}$,

\item $K = \mathbb{Q}\left(\sqrt{p + B\sqrt{p}}\right)$, where $p \equiv 1 \pmod {4}$, $B \not\equiv 0 \pmod {4}$, $p = B^{2} + C^{2}$ and $N_{K/\mathbb{Q}}(\mathcal{O}_{K}^{*}) = \{1\}$,

\item $K = \mathbb{Q}\left(\sqrt{q(p + B\sqrt{p})}\right)$, where $p \equiv 1 \pmod {4}$ is a prime, $p = B^{2} + C^{2}$, $q + B \equiv 1 \pmod {4}$ and $N_{K/\mathbb{Q}}(\mathcal{O}_{K}^{*}) = \{1\}$.
\end{enumerate}
\end{theorem}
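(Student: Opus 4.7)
The plan is to derive an explicit formula for $|Po(K)|$ in the cyclic quartic setting analogous to Hilbert's Proposition \ref{hilb} and to the cyclic cubic identity $|Po(K)| = 3^{r_K - 1}$, and then to decide when that order equals one. First I would invoke Zantema's theorem (the abelian-extension result whose proof the paper promises to give later), which expresses $|Po(K)|$ as a quotient of the product of ramification indices $\prod_{p} e_p(K/\mathbb{Q})$ by $[K:\mathbb{Q}]$ times a cohomological correction coming from $H^1(\mathrm{Gal}(K/\mathbb{Q}), \mathcal{O}_K^*)$. For a cyclic quartic, each ramified prime has $e_p \in \{2,4\}$, and the cohomology of $\mathcal{O}_K^*$ reduces to the index $[\{\pm 1\} : N_{K/\mathbb{Q}}(\mathcal{O}_K^*) \cap \{\pm 1\}]$. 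Thus $|Po(K)|$ becomes an explicit power of $2$ controlled by the number of ramified primes and by whether $-1$ is a norm from $K$.

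Second, I would translate the ramification data into conditions on $A$, $B$, $C$, $D$. The unique quadratic subfield of $K$ is $\mathbb{Q}(\sqrt{D})$; odd prime divisors of $A$ are totally ramified in $K/\mathbb{Q}$, odd prime divisors of $D$ ramify in the quadratic subfield and then split or remain inert in $K/\mathbb{Q}(\sqrt{D})$, and the prime $2$ requires delicate local analysis depending on $D \bmod 8$ and $B \bmod 4$. The congruence dichotomies $B \equiv 0 \pmod 4$ versus $B \equiv 2 \pmod 4$ appearing in items (3), (4) and (5), as well as the mixed condition $q + B \equiv 1 \pmod 4$ in item (6), should emerge naturally from this $2$-adic case distinction, which dictates whether $2$ is ramified in $K$ and with what index.

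Third, combining the two preceding steps, the condition $|Po(K)| = 1$ will force the total count of ramified primes to be at most two, with a compensating unit-norm condition in the boundary case. I would then enumerate all admissible $(A, D, B, C)$: either $D = 2$, yielding cases (1) and (2) with the subfield $\mathbb{Q}(\sqrt{2})$ and the element $\sqrt{2+\sqrt{2}}$; or $D = p$ with $p \equiv 1 \pmod 4$ so that $p = B^2 + C^2$, yielding cases (3)--(6) with $A \in \{1, q\}$ for $q$ an odd prime, and then matching the congruence and norm-of-units conditions to the corresponding local $2$-adic data.

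The main obstacle will be the ramification and unit-norm analysis at $p = 2$: because $\sqrt{A(D + B\sqrt{D})}$ generates a wildly ramified local extension when $D$ is even or $B$ is even, and because the cohomological term $[\{\pm 1\}: N_{K/\mathbb{Q}}(\mathcal{O}_K^*) \cap \{\pm 1\}]$ is notoriously hard to pin down, determining exactly which $(A,B,C,D)$ satisfy $|Po(K)|=1$ requires a careful genus-theoretic argument along the lines of Leriche's treatment of the real quadratic case, combined with explicit local discriminant computations that separate the six listed families from the remaining cyclic quartic fields.
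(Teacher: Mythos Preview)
The paper itself contains no proof of this statement: it is quoted directly from Leriche \cite[Theorem~4.4]{leriche} as part of the survey in Section~2, with only a citation and no argument. There is thus no ``paper's own proof'' against which to compare your proposal.

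That said, your outlined strategy is precisely Leriche's: feed the cyclic quartic $K$ into Zantema's exact sequence (Theorem~\ref{zmt}), compute the ramification indices $e_i\in\{2,4\}$ from the explicit parametrisation $K=\mathbb{Q}\bigl(\sqrt{A(D+B\sqrt{D})}\bigr)$, determine $|H^{1}(G,\mathcal{O}_K^{*})|$, and enumerate the cases with $|Po(K)|=1$. The $2$-adic case analysis you anticipate, governed by $D\bmod 8$ and $B\bmod 4$, is indeed what produces the six listed families.

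One technical point needs sharpening. Your claim that $H^{1}(G,\mathcal{O}_K^{*})$ ``reduces to the index $[\{\pm 1\}:N_{K/\mathbb{Q}}(\mathcal{O}_K^{*})\cap\{\pm 1\}]$'' conflates $H^{1}$ with $\hat{H}^{0}(G,\mathcal{O}_K^{*})=\{\pm 1\}/N_{K/\mathbb{Q}}(\mathcal{O}_K^{*})$. For a cyclic group these differ by the Herbrand quotient of $\mathcal{O}_K^{*}$, which in turn depends on the archimedean signature of $K$; already in the quadratic case one sees $|H^{1}|=2$ or $4$ while the norm index is $1$ or $2$. In the quartic setting the computation of $|H^{1}(G,\mathcal{O}_K^{*})|$ is genuinely more delicate and requires the Hasse unit index $[\mathcal{O}_K^{*}:\mathcal{O}_{\mathbb{Q}(\sqrt{D})}^{*}]$ together with the norm information; the condition $N_{K/\mathbb{Q}}(\mathcal{O}_K^{*})=\{1\}$ in items~(2),~(5),~(6) enters through this route rather than as a direct equality. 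This is the step where most of the work in Leriche's proof lies, and your proposal should flag it as such rather than as a one-line reduction.
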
 

Before we state Leriche's classification of bi-quadratic P\'{o}lya fields under the hypotheses that two of its quadratic subfields are P\'{o}lya, we record the following result of Zantema that provides some sufficient conditions for the compositum of two Galois P\'{o}lya fields to be again a P\'{o}lya field.

\begin{theorem} \cite[Theorem 3.4]{zantema}
Let $K_{1}$ and $K_{2}$ be number fields that are Galois over $\mathbb{Q}$. Let $L = K_{1}K_{2}$ be their compositum and let $K = K_{1} \cap K_{2}$. For a rational prime $p$, let $e_{1}(p)$ (respectively, $e_{2}(p)$) be the ramification index of $p$ in $K_{1}/\mathbb{Q}$ (respectively, in $K_{2}/\mathbb{Q}$). Then the following hold.

\begin{enumerate}
\item If both $K_{1}$ and $K_{2}$ are P\'{o}lya fields and $\gcd(e_{1}(p),e_{2}(p)) = 1$ for all rational primes $p$, then $L$ is a P\'{o}lya field.

\item Assume that either $[K_{1} : K], [K_{2} : K]$ and $[K : \mathbb{Q}]$ are pairwise relatively prime, or $K$ itself is a P\'{o}lya field. If $L$ is a P\'{o}lya field, then both $K_{1}$ and $K_{2}$ are P\'{o}lya fields.
\end{enumerate}
\end{theorem}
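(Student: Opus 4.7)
The plan is to use the principal-generator criterion throughout: for any Galois extension $F/\mathbb{Q}$ and rational prime $p$ with $q = p^{f_F(p)}$, one has the identity $\Pi_q(F)^{e_F(p)} = p\mathcal{O}_F$, so $F$ is a P\'olya field if and only if every $\Pi_q(F)$ is itself principal. The theorem then reduces to tracking principal generators of these ideals under ideal extension upward and under the norm map downward, between the fields $K$, $K_1$, $K_2$, and $L$.

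For part (1), fix a rational prime $p$. By P\'olya-ness of $K_i$, pick $\alpha_i \in \mathcal{O}_{K_i}$ with $(\alpha_i) = \Pi_{q_{K_i}}(K_i)$. Since $L/K_i$ is Galois, the uniform factorization of primes of $K_i$ in $L$ gives $\alpha_i \mathcal{O}_L = \Pi_{q_L}(L)^{e_L(p)/e_i(p)}$. The coprimality $\gcd(e_1(p), e_2(p)) = 1$ combined with the inertia embedding $I_L \hookrightarrow \mathrm{Gal}(K_1/\mathbb{Q}) \times \mathrm{Gal}(K_2/\mathbb{Q})$ forces $e_L(p) \leq e_1(p) e_2(p)$, while $e_1(p) e_2(p) = \mathrm{lcm}(e_1(p), e_2(p)) \mid e_L(p)$ gives the reverse inequality, so $e_L(p) = e_1(p) e_2(p)$. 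Thus $\alpha_i \mathcal{O}_L = \Pi_{q_L}(L)^{e_{3-i}(p)}$, and any B\'ezout combination $a e_1(p) + b e_2(p) = 1$ produces $\Pi_{q_L}(L) = (\alpha_1^b \alpha_2^a)$; since $p$ was arbitrary, $L$ is P\'olya.

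For part (2), suppose $L$ is P\'olya and write $\Pi_{q_L}(L) = (\beta)$ for each $p$. Applying $N_{L/K_1}$ and using uniformity of ramification and residue degree in the Galois extension $L/K_1$, I compute $N_{L/K_1}(\Pi_{q_L}(L)) = \Pi_{q_{K_1}}(K_1)^{[L:K_1]/e'_1}$ with $e'_1 = e_L(p)/e_1(p)$, yielding the principal relation $\Pi_{q_{K_1}}(K_1)^{[L:K_1]/e'_1}$ alongside the trivial one $\Pi_{q_{K_1}}(K_1)^{e_1(p)} = (p)$. Under pairwise coprime $[K_1:K], [K_2:K], [K:\mathbb{Q}]$, the tower decomposition $e_1(p) = e_K(p) \tilde{e}_1(p)$ together with the consequent coprimality $\gcd(\tilde{e}_1(p), \tilde{e}_2(p)) = 1$ (which in turn gives $e_L(p) = e_K(p) \tilde{e}_1(p) \tilde{e}_2(p)$) shows $[L:K_1]/e'_1 = [K_2:K]/\tilde{e}_2(p)$ is coprime to $e_1(p) = e_K(p)\tilde{e}_1(p)$, so the class of $\Pi_{q_{K_1}}(K_1)$ is trivial and $K_1$ is P\'olya. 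Under the alternative hypothesis that $K$ is P\'olya, picking $\gamma \in \mathcal{O}_K$ with $(\gamma) = \Pi_{q_K}(K)$ produces the additional relation $\Pi_{q_{K_1}}(K_1)^{\tilde{e}_1(p)}$ principal, which must be combined with further relations coming from extending $\gamma$ all the way to $L$ and comparing with $\beta$ to force triviality.

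The technical core lies in part (2): the single norm-map bound $\mathrm{ord}([\Pi_{q_{K_1}}(K_1)]) \mid \gcd(e_1(p), [L:K_1]/e'_1)$ is generally not tight (as small biquadratic examples with common ramified primes confirm), so a second independent principal relation must be invoked. Under pairwise coprime degrees the tower argument delivers this relation cleanly, but under the $K$ P\'olya alternative the bookkeeping is more intricate: one must leverage the linear disjointness $\mathrm{Gal}(L/K) = \mathrm{Gal}(K_1/K) \times \mathrm{Gal}(K_2/K)$ to relate $\beta$ and $\gamma$ inside $L$ and extract a pair of principal powers of $\Pi_{q_{K_1}}(K_1)$ whose exponents are genuinely coprime to $e_1(p)$. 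This ramification-theoretic bookkeeping, carried out uniformly across every rational prime, is where the main difficulty of the proof resides.
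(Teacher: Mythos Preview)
The paper does not supply its own proof of this statement; it is quoted from \cite[Theorem 3.4]{zantema} as background and left unproved (the only Zantema result the paper proves is the exact sequence in Theorem \ref{zmt}). There is therefore no in-text argument to compare your approach against. That said, a few remarks on the proposal itself are in order.

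Your treatment of part (1) is correct. The identity $\alpha_i\mathcal{O}_L=\Pi_{q_L}(L)^{e_L(p)/e_i(p)}$ follows from uniform splitting in the Galois extension $L/K_i$, the equality $e_L(p)=e_1(p)e_2(p)$ is obtained exactly as you indicate from the inertia embedding together with $\mathrm{lcm}(e_1(p),e_2(p))\mid e_L(p)$, and the B\'ezout combination then produces a generator of $\Pi_{q_L}(L)$. The first sub-case of part (2) is also handled correctly: under the pairwise-coprime hypothesis one has $\gcd(\tilde{e}_1(p),\tilde{e}_2(p))=1$, whence $e'_1=\tilde{e}_2(p)$ and $[L:K_1]/e'_1=[K_2:K]/\tilde{e}_2(p)$ is coprime to $e_1(p)=e_K(p)\tilde{e}_1(p)$, so the norm relation and $\Pi_{q_{K_1}}(K_1)^{e_1(p)}=(p)$ together kill the class.

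The genuine gap is the second sub-case of part (2), where $K$ is P\'olya but the degrees need not be pairwise coprime. You correctly obtain that $\Pi_{q_{K_1}}(K_1)^{\tilde{e}_1(p)}$ is principal by extending a generator $\gamma$ of $\Pi_{q_K}(K)$, but from that point on you only gesture at ``further relations coming from extending $\gamma$ all the way to $L$ and comparing with $\beta$'' and concede that ``the bookkeeping is more intricate'' without actually carrying it out. This is not a minor omission: once $\gcd([K_1:K],[K_2:K])>1$, the inertia subgroup of $\mathrm{Gal}(L/K)\cong\mathrm{Gal}(K_1/K)\times\mathrm{Gal}(K_2/K)$ need not split as a product, so the identification $e'_1=\tilde{e}_2(p)$ can fail and the exponent $[L:K_1]/e'_1$ in your norm relation is no longer controlled. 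You have not exhibited two principal powers of $\Pi_{q_{K_1}}(K_1)$ with \emph{provably} coprime exponents in this regime, so the argument as written does not close. The proposal is therefore incomplete precisely at the point you yourself flag as the ``main difficulty''.
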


Now, we state Leriche's result about bi-quadratic P\'{o}lya fields in the next theorem.

\begin{theorem} \cite[Theorem 5.1]{leriche}\label{bi-quadratic-polya-classification}
Let $\mathbb{Q}(\sqrt{m})$ and $\mathbb{Q}(\sqrt{n})$ be distinct quadratic P\'{o}lya fields. Then the bi-quadratic field $\mathbb{Q}(\sqrt{m}, \sqrt{n})$ is P\'{o}lya except for the following cases.
\begin{enumerate}
\item $\mathbb{Q}(\sqrt{-2}, \sqrt{p})$ is not a P\'{o}lya field, where $p$ is a prime with $p \equiv 3 \pmod {4}$.

\item $\mathbb{Q}(\sqrt{-1}, \sqrt{2q})$ is not a P\'{o}lya field, where $q$ is an odd prime number.

\end{enumerate}

Moreover, if for two prime numbers $p$ and $q$, the field $\mathbb{Q}(\sqrt{p}, \sqrt{2q})$ is P\'{o}lya, then 
\begin{enumerate}
\item either $p \equiv -1 \pmod {8}$ and $q \equiv \pm {1} \pmod {8}$ or

\item $p \equiv 3 \pmod {8}$ and $q \equiv 1, 3 \pmod {8}$.
\end{enumerate}
\end{theorem}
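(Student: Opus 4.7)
The strategy is to invoke Zantema's compositum theorem stated immediately above in combination with the classification of quadratic P\'{o}lya fields in Proposition \ref{quadratic-poly-classification}. For any quadratic extension $\mathbb{Q}(\sqrt{d})/\mathbb{Q}$ the ramification index of a rational prime is either $1$ or $2$, so the hypothesis $\gcd(e_{1}(p),e_{2}(p)) = 1$ in part (1) of Zantema's theorem fails precisely when some rational prime ramifies in both $\mathbb{Q}(\sqrt{m})$ and $\mathbb{Q}(\sqrt{n})$. Whenever no such common ramified prime exists, the biquadratic compositum $L = \mathbb{Q}(\sqrt{m},\sqrt{n})$ is automatically P\'{o}lya; this already disposes of the generic situation and reduces the problem to a short finite list of overlap cases.

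The next step is to enumerate the pairs $(m,n)$ from Proposition \ref{quadratic-poly-classification} that share a ramified prime. Since $\ell$ ramifies in $\mathbb{Q}(\sqrt{d})$ exactly when $\ell\mid d_{K}$, and the admissible radicands $d$ have at most two prime factors with prescribed congruences, the enumeration is a finite check. Overlaps at $\ell = 2$ dominate: one of $m,n$ lies in $\{-1,\pm 2\}$ or has the form $2p$, paired with a field of even discriminant. A few odd-prime overlaps arise from $\mathbb{Q}(\sqrt{pq})$ or $\mathbb{Q}(\sqrt{-p})$ with $p \equiv 3\pmod{4}$; these can be dispatched by the same method.

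For each remaining family I would compute $Po(L)$ directly. Being a $V_{4}$-Galois extension, $L$ has three quadratic subfields $K_{1},K_{2},K_{3}$, and by definition $Po(L)$ is generated by the classes $[\Pi_{q}(L)]$ attached to the ramified rational primes $q$. The order of each such class can be extracted from the splitting type of $q$ in $L/\mathbb{Q}$ together with the unit-norm indices $[\mathcal{O}_{K_{i}}^{*} : N_{L/K_{i}}(\mathcal{O}_{L}^{*})]$, via an ambiguous-class/genus-theoretic analysis parallel to Proposition \ref{hilb}. In the two exceptional families $\mathbb{Q}(\sqrt{-2},\sqrt{p})$ with $p \equiv 3 \pmod 4$ and $\mathbb{Q}(\sqrt{-1},\sqrt{2q})$, this computation shows that the ambiguous ideal sitting over $2$ is not principal and hence $Po(L)$ is non-trivial; in every other overlap case the same analysis yields a principal generator at each ramified prime, so $L$ is P\'{o}lya. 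The necessary condition for $L = \mathbb{Q}(\sqrt{p},\sqrt{2q})$ then comes from the observation that the third quadratic subfield is $\mathbb{Q}(\sqrt{2pq})$, which must itself be P\'{o}lya; feeding this back into Proposition \ref{quadratic-poly-classification} and invoking quadratic reciprocity on $\left(\tfrac{2}{p}\right)$, $\left(\tfrac{2}{q}\right)$, and $\left(\tfrac{p}{q}\right)$ separates the admissible residues of $p$ and $q$ mod~$8$ into the two listed alternatives.

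The principal obstacle I anticipate is this last step. Making the class-field-theoretic determination of $[\Pi_{q}(L)]$ sufficiently explicit to distinguish the P\'{o}lya from the non-P\'{o}lya sub-cases requires simultaneously tracking ramification, residue degrees, and unit norms at each of the (at most three) ramified primes, and then uniformly translating the principality criterion through quadratic reciprocity until the clean $\pmod{8}$ congruences emerge. Everything else in the argument is a bookkeeping exercise on top of Zantema's theorem, but this genus-theoretic computation for the obstructed cases is the technical heart of the proof.
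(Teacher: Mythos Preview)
The paper does not supply a proof of this statement; it is quoted from \cite{leriche} as part of the survey in Section~2, with no argument given, so there is nothing in the paper to compare your proposal against.

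Regarding the proposal itself: the overall plan --- invoke part~(1) of Zantema's compositum theorem to dispose of pairs with disjoint ramification, then treat the finitely many overlap families by a direct ambiguous-class computation --- is sound and is essentially Leriche's method in the cited source. There is, however, a genuine gap in your treatment of the ``Moreover'' clause. You propose to obtain the congruences on $p,q$ from the requirement that the third quadratic subfield $\mathbb{Q}(\sqrt{2pq})$ be P\'olya, via part~(2) of Zantema's theorem. But $\mathbb{Q}(\sqrt{2pq})$ has three ramified primes, so Proposition~\ref{hilb} forces $|Po(\mathbb{Q}(\sqrt{2pq}))| \ge 2$ for \emph{all} odd primes $p,q$; your argument would then make $\mathbb{Q}(\sqrt{p},\sqrt{2q})$ non-P\'olya unconditionally, contradicting for instance \cite[Theorem~2.4]{rajaei-jnt} (stated just after Theorem~\ref{to-be-referred-to} in this survey), which exhibits P\'olya fields of exactly this shape. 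So either part~(2) of Zantema's theorem requires a stronger coprimality hypothesis than you are reading into it, or its application to the decomposition $L=\mathbb{Q}(\sqrt{p})\cdot\mathbb{Q}(\sqrt{2pq})$ is illegitimate; in any case the mod~$8$ conditions do not arise from the P\'olya property of $\mathbb{Q}(\sqrt{2pq})$ but from a direct Hilbert-symbol and unit-norm computation inside the biquadratic field, which is precisely the ``technical heart'' you flagged but did not carry out.
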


Using \cite[Theorem 1]{lem}, we can conclude that if $K$ is a cyclic, quintic extension, then $|Po(K)| = 5^{r_{K} - 1}$, where $r_{K}$ stands for the number of ramified primes in $K/\mathbb{Q}$. Therefore, a cyclic, quintic field is a P\'{o}lya field if and only if there is exactly one ramified prime in $K/\mathbb{Q}$. Leriche also characterized sextic P\'{o}lya fields that are Galois over $\mathbb{Q}$. The cyclic sextic P\'{o}lya fields are quite straightforward to classify and are given by those which are obtained as a compositum of a quadratic P\'{o}lya field and a cyclic cubic P\'{o}lya field (cf. \cite[Proposition 6.1]{leriche}). Leriche classified sextic Polya fields that contain a pure cubic field as a subfield. In other words, she characterized P\'{o}lya fields of the form $K = \mathbb{Q}(\omega,\sqrt[3]{m})$, where $\omega$ is a complex cube root of unity.

\begin{theorem} \cite[Theorem 6.2]{leriche}
Let $m = ab^{2} \geq 2$ be a cube-free integer and let $K = \mathbb{Q}(\omega,\sqrt[3]{m})$, $K_{1} = \mathbb{Q}(\omega)$ and $K_{2} = \mathbb{Q}(\sqrt[3]{m})$. Then $K$ is a P\'{o}lya field if and only if the following hold.

\begin{enumerate}
\item If $a^{2} \not\equiv b^{2} \pmod {9}$, then for each prime number $p$ with $p \mid 3m$, there exists an element $\alpha \in K_{2}$ such that $N_{K_{2}/\mathbb{Q}}(\alpha) = \pm p$.

\item If $a^{2} \equiv b^{2} \pmod {9}$, then for each prime number $p$ with $p \mid m$, there exists an element $\alpha \in K_{2}$ such that $N_{K_{2}/\mathbb{Q}}(\alpha) = \pm p$.
\end{enumerate}
\end{theorem}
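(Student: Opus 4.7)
The plan is to compute $Po(K)$ via its generators $[\Pi_{p^{f}}(K)]$ for ramified primes $p$ of $K/\mathbb{Q}$, and to prove that each such generator is trivial in $Cl_{K}$ if and only if a specific prime of $K_{2}$ lying above $p$ is principal. Since $K/\mathbb{Q}$ is Galois, all primes above any given $p$ share the same residue degree, so $\Pi_{p^{f}}(K) = p\mathcal{O}_{K}$ whenever $p$ is unramified; thus only ramified primes contribute to $Po(K)$. The ramified primes in $K/\mathbb{Q}$ are $3$ (ramified in $K_{1}/\mathbb{Q}$) together with the primes dividing $m$ (totally ramified in $K_{2}/\mathbb{Q}$). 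The dichotomy in the theorem reflects whether $3$ ramifies in $K_{2}/\mathbb{Q}$: it does exactly in Case 1, while in Case 2 the prime $3$ is unramified in $K_{2}$.

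I would next identify $\Pi_{p^{f}}(K)$ explicitly. For each $p \mid m$ with $p \neq 3$, let $\mathfrak{P}_{0}$ be the unique prime of $K_{2}$ above $p$; then $\mathfrak{P}_{0}^{3} = p\mathcal{O}_{K_{2}}$ and $N(\mathfrak{P}_{0}) = p$. Since $K/K_{2}$ is quadratic and $p \neq 3$ is unramified in $K/K_{2}$, the prime $\mathfrak{P}_{0}$ either splits or remains inert there, and in either case $\Pi_{p^{f}}(K) = \mathfrak{P}_{0}\mathcal{O}_{K}$ (with $f = 1$ or $2$ according to the splitting of $p$ in $K_{1}$). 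For $p = 3$ in Case 1, the prime $3$ is totally ramified in $K$ as $3\mathcal{O}_{K} = \mathfrak{P}^{6}$; setting $\pi = 1 - \omega$ one checks that $\pi\mathcal{O}_{K} = \mathfrak{P}^{3}$ (so $[\mathfrak{P}]^{3} = 1$ in $Cl_{K}$) and $\mathfrak{P}_{0}\mathcal{O}_{K} = \mathfrak{P}^{2}$, where $\mathfrak{P}_{0}$ is the unique prime of $K_{2}$ above $3$. For $p = 3$ in Case 2, $3$ has ramification index $2$ in $K$, and from $3\mathcal{O}_{K} = \pi^{2}\mathcal{O}_{K} = \Pi_{3^{f}}(K)^{2}$ together with unique factorisation of ideals one concludes $\Pi_{3^{f}}(K) = \pi\mathcal{O}_{K}$, which is principal. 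Hence $3$ contributes no obstruction in Case 2, explaining why it does not appear in the second condition.

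It remains to show, for the ramified primes not handled automatically, that $\Pi_{p^{f}}(K)$ is principal in $K$ if and only if $\mathfrak{P}_{0}$ is principal in $K_{2}$. One direction is immediate: a generator of $\mathfrak{P}_{0}$ in $K_{2}$ also generates $\mathfrak{P}_{0}\mathcal{O}_{K}$ in $K$, and through the identifications above (together with $[\mathfrak{P}]^{3} = 1$ in the $p = 3$ Case 1 scenario) this forces $[\Pi_{p^{f}}(K)] = 1$. For the converse, applying the relative norm $N_{K/K_{2}}$ to a generator of $\mathfrak{P}_{0}\mathcal{O}_{K}$ gives that $\mathfrak{P}_{0}^{2}$ is principal in $K_{2}$; since $\mathfrak{P}_{0}^{3} = p\mathcal{O}_{K_{2}}$ is also principal and $\gcd(2,3) = 1$, we deduce that $\mathfrak{P}_{0}$ is principal. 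Finally, principality of $\mathfrak{P}_{0}$ (a prime of norm $p$) in $K_{2}$ is exactly equivalent to the existence of $\alpha \in K_{2}$ with $N_{K_{2}/\mathbb{Q}}(\alpha) = \pm p$, yielding the stated characterisation. The main technical obstacle is the $p = 3$ analysis in Case 1, where one must coordinate the information coming from $\pi \in K_{1}$ (giving $[\mathfrak{P}]^{3} = 1$) with that coming from $\mathfrak{P}_{0} \in K_{2}$ (giving $[\mathfrak{P}]^{2} = [\mathfrak{P}_{0}\mathcal{O}_{K}]$) in order to pin down the class $[\mathfrak{P}]$ in the $3$-torsion of $Cl_{K}$ and reduce it cleanly to a norm condition in $K_{2}$.
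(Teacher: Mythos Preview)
The survey does not supply its own proof of this result: the theorem is quoted from Leriche \cite{leriche} and stated without argument, as are all the other classification results in Section~2. There is therefore no in-paper proof against which to compare your proposal.

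Your strategy---reducing the triviality of each generator $[\Pi_{p^{f}}(K)]$ to the principality of the unique prime $\mathfrak{P}_{0}$ of $K_{2}$ above $p$, and then to a norm equation in $K_{2}$---is the natural one and matches what Leriche does. Two small remarks. First, the ``main technical obstacle'' you flag at $p=3$ in Case~1 dissolves immediately: since $[\mathfrak{P}]^{3}=1$, squaring is a bijection on $\langle[\mathfrak{P}]\rangle$, so $[\mathfrak{P}]=1$ if and only if $[\mathfrak{P}]^{2}=[\mathfrak{P}_{0}\mathcal{O}_{K}]=1$, after which your norm-down-to-$K_{2}$ argument applies verbatim. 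Second, in the final step you should justify why the existence of $\alpha\in K_{2}$ with $N_{K_{2}/\mathbb{Q}}(\alpha)=\pm p$ forces $\mathfrak{P}_{0}$ itself (rather than merely some fractional ideal of norm $p$) to be principal; since $p$ is totally ramified in $K_{2}$, the $p$-part of $(\alpha)$ is exactly $\mathfrak{P}_{0}$, but the coprime cofactor has norm $1$ and you need a word on why it may be taken trivial (e.g.\ by arranging $\alpha\in\mathcal{O}_{K_{2}}$).
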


\section{Zantema's result on P\'{o}lya fields and some applications}

The theorems quoted in the previous section are concerned about some Galois extensions of $\mathbb{Q}$ being P\'{o}lya and in some cases they were derived using a result proved by Zantema in \cite{zantema}. Zantema considered a finite Galois extension $K$ over $\mathbb{Q}$ and studied the connection between $Po(K)$ and the set of ramified primes in $K/\mathbb{Q}$. In this section, we state his result and briefly outline a proof of the same.

\smallskip

For a finite Galois extension $K/\mathbb{Q}$, let $G := {\rm{Gal}}(K/\mathbb{Q})$. Then $G$ acts on the unit group $\mathcal{O}_{K}^{*}$ by the rule $(\sigma,\alpha) \mapsto \sigma(\alpha)$ for $\sigma \in G$ and $\alpha \in \mathcal{O}_{K}^{*}$. Thus $\mathcal{O}_{K}^{*}$ is endowed with a $G$-module structure. Zantema established a connection between the cohomology group $H^{1}(G,\mathcal{O}_{K}^{*})$ and the P\'{o}lya group $Po(K)$ via the ramified primes as follows.

\begin{theorem} \cite[Page 163]{zantema}\label{zmt}
Let $K/\mathbb{Q}$ be a finite Galois extension with Galois group $G := {\rm{Gal}}(K/\mathbb{Q})$ and let $p_{1},\ldots,p_{t}$ be all the ramified primes in $K/\mathbb{Q}$. For each $i \in \{1,\ldots,t\}$, let $e_{i}$ be the ramification index of $p_{i}$. Then there is an exact sequence as follows.
\begin{equation}\label{exact-equn}
0 \to H^{1}(G,\mathcal{O}_{K}^{*}) \to \displaystyle\bigoplus_{i = 1}^{t}\mathbb{Z}/e_{i}\mathbb{Z} \to Po(K) \to 0.
\end{equation}
\end{theorem}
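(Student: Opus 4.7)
The natural strategy is to realize the three groups appearing in (\ref{exact-equn}) as quotients of three nested subgroups of a single ambient group of ideals, and then match them up. Let $\mathcal{I}$ denote the group of nonzero fractional ideals of $\mathcal{O}_{K}$, let $P_{K}\subset\mathcal{I}$ be the subgroup of principal ideals, and write $\mathcal{I}^{G}$ and $P_{K}^{G}$ for their $G$-invariant subgroups. Let $P_{\mathbb{Q}}\subseteq P_{K}^{G}$ be the subgroup generated by the ideals $p\mathcal{O}_{K}$ as $p$ ranges over rational primes. The plan is to establish the three isomorphisms
\[
\bigoplus_{i=1}^{t}\mathbb{Z}/e_{i}\mathbb{Z}\;\cong\;\mathcal{I}^{G}/P_{\mathbb{Q}},\qquad Po(K)\;\cong\;\mathcal{I}^{G}/P_{K}^{G},\qquad H^{1}(G,\mathcal{O}_{K}^{*})\;\cong\;P_{K}^{G}/P_{\mathbb{Q}},
\]
after which (\ref{exact-equn}) becomes the tautological short exact sequence $0\to P_{K}^{G}/P_{\mathbb{Q}}\to\mathcal{I}^{G}/P_{\mathbb{Q}}\to\mathcal{I}^{G}/P_{K}^{G}\to 0$ coming from the chain $P_{\mathbb{Q}}\subseteq P_{K}^{G}\subseteq\mathcal{I}^{G}$.

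First I would unpack $\mathcal{I}^{G}$. Because $K/\mathbb{Q}$ is Galois, $G$ acts transitively on the primes of $\mathcal{O}_{K}$ over a given rational prime, so an ideal is $G$-invariant iff its valuations at the primes above each $p$ agree. Consequently $\mathcal{I}^{G}$ is free abelian on the ideals $\Pi_{p}:=\Pi_{p^{f(p)}}(K)=\prod_{\mathfrak{p}\mid p}\mathfrak{p}$, and the factorization $p\mathcal{O}_{K}=\Pi_{p}^{e(p)}$ (with $e(p)=1$ at unramified primes) identifies $P_{\mathbb{Q}}$ with $\bigoplus_{p}e(p)\mathbb{Z}\cdot\Pi_{p}$. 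Taking the quotient collapses the unramified summands and yields $\mathcal{I}^{G}/P_{\mathbb{Q}}\cong\bigoplus_{i=1}^{t}\mathbb{Z}/e_{i}\mathbb{Z}$. The second isomorphism is immediate from the definition of the P\'olya group: $Po(K)$ is the image of $\mathcal{I}^{G}\hookrightarrow\mathcal{I}\twoheadrightarrow Cl_{K}$, whose kernel is $\mathcal{I}^{G}\cap P_{K}=P_{K}^{G}$.

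The heart of the argument is the third isomorphism, which is where Galois cohomology enters. I would apply the long exact sequence of $G$-cohomology to
\[
1\to\mathcal{O}_{K}^{*}\to K^{*}\to P_{K}\to 1,
\]
where $P_{K}\cong K^{*}/\mathcal{O}_{K}^{*}$ inherits its $G$-action from $K^{*}$. The invariants are $(K^{*})^{G}=\mathbb{Q}^{*}$ and $(\mathcal{O}_{K}^{*})^{G}=\{\pm 1\}$, while Hilbert 90 supplies $H^{1}(G,K^{*})=0$. The long exact sequence therefore truncates to
\[
1\to\{\pm 1\}\to\mathbb{Q}^{*}\to P_{K}^{G}\to H^{1}(G,\mathcal{O}_{K}^{*})\to 0,
\]
and the image of $\mathbb{Q}^{*}\to P_{K}^{G}$, $\alpha\mapsto(\alpha)$, is exactly $P_{\mathbb{Q}}$, because the ideals of $\mathcal{O}_{K}$ generated by rational numbers are freely generated (up to sign) by the rational primes. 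Hence $H^{1}(G,\mathcal{O}_{K}^{*})\cong P_{K}^{G}/P_{\mathbb{Q}}$.

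The anticipated obstacle is not cohomological but a bookkeeping one: I must verify that the natural surjection $\mathcal{I}^{G}/P_{\mathbb{Q}}\twoheadrightarrow\mathcal{I}^{G}/P_{K}^{G}$ induced by the nested chain really coincides with the map $\bigoplus_{i=1}^{t}\mathbb{Z}/e_{i}\mathbb{Z}\to Po(K)$ that sends the $i$-th standard generator to $[\Pi_{p_{i}^{f_{i}}}(K)]$, and that the connecting homomorphism $P_{K}^{G}\to H^{1}(G,\mathcal{O}_{K}^{*})$ is compatible with passing to the quotient by $P_{\mathbb{Q}}$. Once these compatibilities are confirmed, Hilbert 90 does the decisive work and the three isomorphisms splice together into (\ref{exact-equn}).
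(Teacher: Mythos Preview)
Your proposal is correct and essentially identical to the paper's proof: both establish the chain $P_{\mathbb{Q}}\subseteq P_{K}^{G}\subseteq\mathcal{I}^{G}$ (the paper writes $\mathbb{Q}^{*}/\{\pm 1\}$, $\mathcal{P}(K)^{G}$, $I(K)^{G}$), identify the successive quotients with $H^{1}(G,\mathcal{O}_{K}^{*})$, $\bigoplus_{i}\mathbb{Z}/e_{i}\mathbb{Z}$, and $Po(K)$ via transitivity of the $G$-action on primes over $p$, the long exact sequence attached to $1\to\mathcal{O}_{K}^{*}\to K^{*}\to P_{K}\to 1$, and Hilbert~90, and then read off the short exact sequence. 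The only cosmetic difference is that the paper defines an explicit surjection $F:I(K)^{G}\to\mathcal{C}$ and computes its kernel rather than passing directly to the quotient as you do.
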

\begin{proof}
For a rational prime $p$, let $e_{p}$ and $f_{p}$ denote the ramification index and the residual degree, respectively. Let $$p\mathcal{O}_{K} = (\mathfrak{p}_{1}\cdots \mathfrak{p}_{g})^{e_{p}}$$ be the decomposition of the rational prime $p$ into the product of prime ideals in $\mathcal{O}_{K}$. Since $G$ acts transitively on $\{\mathfrak{p}_{1},\ldots ,\mathfrak{p}_{g}\}$, we conclude that $\displaystyle\Pi_{p^{f_{p}}} = \displaystyle\prod_{\substack{\mathfrak{m} \in Spec (\mathcal{O}_{K}) \\ N(\mathfrak{m}) = p^{f_{p}}}}\mathfrak{m}$ is invariant under the action of $G$.

\medskip

Let $I(K)$ be the group of all fractional ideals of $K$ and let $I(K)^{G}$ denote the subgroup of all fractional ideals that are invariant under the action of $G$. Then clearly, $\displaystyle\Pi_{p^{f_{p}}} \in I(K)^{G}$. Also, since the action of $G$ on the prime ideals lying above a rational prime is transitive, we obtain that $\{\displaystyle\Pi_{p^{f_{p}}} : p \mbox{ is a prime in } \mathbb{Z}\}$ generates $I(K)^{G}$.

\medskip

Let $\mathcal{C} = \displaystyle\bigoplus_{i = 1}^{t}\mathbb{Z}/e_{i}\mathbb{Z}$ and we define a group homomorphism $F : I(K)^{G} \to \mathcal{C}$ by setting $$F(\displaystyle\Pi_{p^{f_{p}}}) = (1 \pmod {e_{1}},\ldots ,1 \pmod {e_{t}})$$ for prime numbers $p$ and then we extend it additively to $I(K)^{G}$. Note that, $F$ is surjective because for a given element $(\bar{x_{1}},\ldots ,\bar{x_{t}}) \in \mathcal{C}$, we have $F\left(\displaystyle(\Pi_{p_{1}^{f_{p_{1}}}})^{x_{1}})\cdots (\displaystyle(\Pi_{p_{t}^{f_{p_{t}}}})^{x_{t}}\right) = (\bar{x_{1}},\ldots ,\bar{x_{t}})$, where $p_{i}$'s are the ramified primes in $K/\mathbb{Q}$.

\medskip

Now, we determine ${\rm{ker}}(F)$. For that, let $\left(\displaystyle(\Pi_{p_{1}^{f_{p_{1}}}})^{x_{1}})\cdots (\displaystyle(\Pi_{p_{t}^{f_{p_{t}}}})^{x_{t}}\right)\cdot \displaystyle\prod_{p \neq p_{i}}(\Pi_{p^{f_{p}}})^{x_{p}} \in {\rm{ker}} (F)$. That is, $e_{i} \mid x_{i}$ for all $i \in \{1,\ldots,t\}$. Since $\Pi_{p^{f_{p}}} = p\mathcal{O}_{K}$ for all $p \neq p_{i}$, we conclude that ${\rm{ker}} (F) = \{\alpha\mathcal{O}_{K} : \alpha \in \mathbb{Q}_{>0}\} \simeq \mathbb{Q}_{>0} \simeq \mathbb{Q}^{*}/\{\pm {1}\}$. Therefore, from the exact sequence $$1 \to \mathbb{Q}^{*}/\{\pm {1}\} \to I(K)^{G} \to \mathcal{C} \to 1,$$ we obtain that
\begin{equation}\label{C-iso}
I(K)^{G}/(\mathbb{Q}^{*}/\{\pm {1}\}) \simeq \mathcal{C}.
\end{equation}
Again, if we denote by $\mathcal{P}(K)$ the group of principal fractional ideals of $K$, then from the exact sequence $$1 \to \mathcal{O}_{K}^{*} \to K^{*} \to \mathcal{P}(K) \to 1,$$ we obtain $$1 \to H^{0}(G,\mathcal{O}_{K}^{*}) \to H^{0}(G,K^{*}) \to H^{0}(G,\mathcal{P}(K)) \to H^{1}(G,\mathcal{O}_{K}^{*}) \to H^{1}(G,K^{*}).$$ Using Hilbert's Theorem 90, we obtain $$1 \to \{\pm {1}\} \to \mathbb{Q}^{*} \xrightarrow {f} \mathcal{P}(K)^{G} \xrightarrow {g} H^{1}(G,\mathcal{O}_{K}^{*}) \to 1 \mbox{ is exact }.$$ Consequently, we have 
\begin{equation}\label{pkg-iso}
\mathcal{P}(K)^{G}/(\mathbb{Q}^{*}/\{\pm {1}\}) \simeq H^{1}(G,\mathcal{O}_{K}^{*}).
\end{equation}
Now, the inclusion map $\mathcal{P}(K)^{G} \xhookrightarrow{} I(K)^{G}$ induces the injection $\mathcal{P}(K)^{G}/(\mathbb{Q}^{*}/\{\pm {1}\}) \xhookrightarrow{T} I(K)^{G}/(\mathbb{Q}^{*}/\{\pm {1}\})$. From \eqref{pkg-iso}, we get the injection $T : H^{1}(G,\mathcal{O}_{K}^{*}) \xhookrightarrow{} \mathcal{C}$ and the quotient is $I(K)^{G}/\mathcal{P}(K)^{G} \simeq Po(K)$. This completes the proof of the theorem.
\end{proof}

Theorem \ref{zmt} is very powerful to construct number fields with non-trivial P\'{o}lya groups and of varied degrees. P\'{o}lya groups of bi-quadratic fields have been extensively studied in the recent years. We list some of those results below.

\begin{theorem} \cite[Theorem A - D]{rajaei}\label{to-be-referred-to}
Let $p,q$ and $r$ be three distinct odd prime numbers and let $K = \mathbb{Q}(\sqrt{p},\sqrt{qr})$. Then the following hold true.
\begin{enumerate}
\item If $p \equiv 3 \pmod {4}$, $q \equiv 1 \pmod {4}$, $r \equiv 1 \pmod {8}$, $\left(\frac{q}{r}\right) = -1$ and $\left(\frac{p}{r}\right) = 1$, then $Po(K) \simeq \mathbb{Z}/2\mathbb{Z}$.

\item If $p \equiv q \equiv r \equiv 1 \pmod {4}$, $\left(\frac{p}{r}\right) = 1$, $\left(\frac{q}{r}\right) = -1$ and the fundamental unit of $\mathbb{Q}(\sqrt{pqr})$ has positive norm, then $Po(K) \simeq \mathbb{Z}/2\mathbb{Z}$.

\item If $p \equiv q \equiv 3 \pmod {4}$ and $r \equiv 5 \pmod {8}$, then $Po(K)$ is trivial. That is, $K$ is a P\'{o}lya field.

\item If $p \equiv 3 \pmod {4}$, $q \equiv 1 \pmod {4}$, $r \equiv 5 \pmod {8}$, $\left(\frac{p}{r}\right) = 1$ and $\left(\frac{p}{q}\right) = -1$, then $K$ is a P\'{o}lya field. That is, $Po(K)$ is trivial.
\end{enumerate}
\end{theorem}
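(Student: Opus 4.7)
The plan is to apply Zantema's exact sequence (Theorem \ref{zmt}) to the biquadratic field $K = \mathbb{Q}(\sqrt{p}, \sqrt{qr})$ with Galois group $G \simeq (\mathbb{Z}/2\mathbb{Z})^{2}$. Because $K/\mathbb{Q}$ is elementary abelian of exponent $2$, every ramified prime contributes $e_{i}=2$, so \eqref{exact-equn} reduces to
$$
0 \to H^{1}(G, \mathcal{O}_{K}^{*}) \to (\mathbb{Z}/2\mathbb{Z})^{t} \to Po(K) \to 0,
$$
where $t$ is the number of rational primes that ramify in $K$. In particular $|Po(K)| = 2^{t}/|H^{1}(G,\mathcal{O}_{K}^{*})|$ and $Po(K)$ is an elementary abelian $2$-group. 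The problem therefore splits cleanly into (i) determining $t$ from the hypothesized congruences, and (ii) computing $|H^{1}(G,\mathcal{O}_{K}^{*})|$.

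For step (i), a prime $\ell$ ramifies in $K$ iff it ramifies in one of the three quadratic subfields $k_{1} = \mathbb{Q}(\sqrt{p})$, $k_{2} = \mathbb{Q}(\sqrt{qr})$, $k_{3} = \mathbb{Q}(\sqrt{pqr})$. The odd ramified primes are always $p,q,r$, while $2$ ramifies iff one of $p, qr, pqr$ is $\equiv 2,3 \pmod{4}$. Running through the four cases: $t=4$ in (1), (3), (4) and $t=3$ in (2). The announced conclusions therefore amount to showing $|H^{1}(G,\mathcal{O}_{K}^{*})|$ equals $8$ in case (1), $4$ in case (2), and $16$ in cases (3) and (4).

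For step (ii), I would analyse $\mathcal{O}_{K}^{*}$ as a $G$-module via its three subfield unit groups $\mathcal{O}_{k_{i}}^{*}$. Since $K$ is totally real in each case, the torsion is $\mu_{K} = \{\pm 1\}$, and the long exact cohomology sequence attached to $1 \to \{\pm 1\} \to \mathcal{O}_{K}^{*} \to \mathcal{O}_{K}^{*}/\{\pm 1\} \to 1$ reduces the computation to the free part, which is a rank-$3$ $G$-lattice essentially pinned down by the fundamental units $\varepsilon_{i}$ of $k_{i}$ and the index $[\mathcal{O}_{K}^{*} : \{\pm 1\}\mathcal{O}_{k_{1}}^{*}\mathcal{O}_{k_{2}}^{*}\mathcal{O}_{k_{3}}^{*}] \in \{1,2\}$. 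One then feeds in the signs of the norms $N_{k_{i}/\mathbb{Q}}(\varepsilon_{i})$: Proposition \ref{hilb} together with quadratic reciprocity and the hypothesized Legendre symbols $\left(\tfrac{p}{r}\right)$, $\left(\tfrac{q}{r}\right)$, $\left(\tfrac{p}{q}\right)$ and the parity conditions mod $4$ and $8$ determine these signs case by case (and in (2) the norm of the fundamental unit of $\mathbb{Q}(\sqrt{pqr})$ is given outright). Restricting $H^{1}$ via inflation-restriction to each of the three index-$2$ subgroups $\mathrm{Gal}(K/k_{i})$ and applying Hilbert 90 on each factor then reduces the cohomology order to an arithmetic count of $\pm 1$-norms.

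The principal obstacle is step (ii): the arithmetic hypotheses are hand-picked to create or destroy exactly one cohomology class, and tracking it demands a careful diagram chase relating $\varepsilon_{K}$ to the $\varepsilon_{i}$. In the ``trivial Polya'' cases (3) and (4), the conditions force enough independent sign relations to make the restriction map $H^{1}(G,\mathcal{O}_{K}^{*}) \to (\mathbb{Z}/2\mathbb{Z})^{t}$ surjective, whereas in cases (1) and (2) a single sign relation fails, leaving a $\mathbb{Z}/2\mathbb{Z}$ cokernel, i.e.\ $Po(K) \simeq \mathbb{Z}/2\mathbb{Z}$. The rest of the proof is a finite and rather mechanical case split that drops out of the general framework above.
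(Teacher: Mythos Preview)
The paper does not supply its own proof of this theorem; it is quoted verbatim from \cite{rajaei} as an illustration of Theorem~\ref{zmt}, so there is no in-paper argument to benchmark your proposal against.

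That said, your outline is the same framework Heidaryan--Rajaei themselves use: apply Zantema's sequence \eqref{exact-equn}, observe that every $e_i=2$, count the ramified primes (your values $t=4,3,4,4$ are correct), and reduce everything to computing $|H^{1}(G,\mathcal{O}_K^{*})|$. The genuine gap is that you wave this last step through as ``finite and rather mechanical.'' It is not. For a totally real biquadratic field the $G$-module $\mathcal{O}_K^{*}$ is controlled by Kubota's classification: one must decide which of the seven products $\varepsilon_1^{a}\varepsilon_2^{b}\varepsilon_3^{c}$ become squares in $K$, equivalently compute the Hasse unit index $Q_K=[\mathcal{O}_K^{*}:\langle -1,\varepsilon_1,\varepsilon_2,\varepsilon_3\rangle]\in\{1,2,4\}$. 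Proposition~\ref{hilb} tells you $|Po(k_i)|$ once you know $N(\varepsilon_i)$, not the other way around; extracting the sign of $N(\varepsilon_i)$ from the Legendre-symbol hypotheses already requires genus-theory arguments (e.g.\ that $N(\varepsilon_{qr})=-1$ when $q\equiv r\equiv 1\pmod 4$ and $(q/r)=-1$), and pinning down $Q_K$ on top of that needs further work with local norm residue symbols. This is precisely the content of \cite{rajaei}, and it does not ``drop out'' of inflation--restriction plus Hilbert~90 alone. Your sketch identifies the right invariants but stops short of the actual computation that distinguishes the four cases.
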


\begin{theorem} \cite[Theorem 2.4]{rajaei-jnt}
Let $p$ and $q$ be two prime numbers with $p \equiv q \equiv 3 \pmod {8}$. Then $K = \mathbb{Q}(\sqrt{p},\sqrt{2q})$ is a P\'{o}lya field.
\end{theorem}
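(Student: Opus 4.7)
The plan is to apply Zantema's Theorem \ref{zmt} to $K = \mathbb{Q}(\sqrt{p},\sqrt{2q})$. First I would pin down the ramified primes. The three quadratic subfields of $K$ are $\mathbb{Q}(\sqrt{p}),\ \mathbb{Q}(\sqrt{2q}),\ \mathbb{Q}(\sqrt{2pq})$. Since $p \equiv q \equiv 3 \pmod{8}$, we get $pq \equiv 1 \pmod{8}$, hence $2pq \equiv 2 \pmod{4}$, so $2$ ramifies in $\mathbb{Q}(\sqrt{2pq})$; it clearly also ramifies in $\mathbb{Q}(\sqrt{p})$ (as $p \equiv 3 \pmod{4}$) and in $\mathbb{Q}(\sqrt{2q})$. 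Hence the inertia group of $2$ in $G = \mathrm{Gal}(K/\mathbb{Q}) \cong (\mathbb{Z}/2\mathbb{Z})^{2}$ cannot sit inside any index-$2$ subgroup, forcing $e_{2} = 4$ with $2\mathcal{O}_{K} = \mathfrak{P}^{4}$ for a unique prime $\mathfrak{P}$. The odd primes $p$ and $q$ each ramify in exactly one of $\mathbb{Q}(\sqrt{p}),\ \mathbb{Q}(\sqrt{2q})$, so $e_{p} = e_{q} = 2$; no other primes ramify.

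Theorem \ref{zmt} now gives
\[
0 \to H^{1}(G,\mathcal{O}_{K}^{*}) \to \mathbb{Z}/4\mathbb{Z} \oplus \mathbb{Z}/2\mathbb{Z} \oplus \mathbb{Z}/2\mathbb{Z} \to Po(K) \to 0,
\]
so to prove $Po(K) = 0$ it suffices (via the isomorphism $\mathcal{P}(K)^{G}/(\mathbb{Q}^{*}/\{\pm 1\}) \simeq H^{1}(G,\mathcal{O}_{K}^{*})$ built in the proof of Theorem \ref{zmt}) to exhibit $G$-invariant principal ideals of $K$ whose classes modulo $\mathbb{Q}^{*}/\{\pm 1\}$ generate the full middle group $\mathcal{C}$ of order $16$.

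Since $p \equiv 3 \pmod{4}$ and $q \equiv 3 \pmod{4}$, Proposition \ref{quadratic-poly-classification}(1) and (2) imply that both $\mathbb{Q}(\sqrt{p})$ and $\mathbb{Q}(\sqrt{2q})$ are themselves P\'{o}lya fields. Consequently each ramified prime of each of these two subfields is principal: pick $\beta_{p} \in \mathcal{O}_{\mathbb{Q}(\sqrt{p})}$ generating the prime above $2$ (so $\beta_{p}^{2} = 2\epsilon_{p}$ for a unit $\epsilon_{p}$) and $\alpha_{q} \in \mathcal{O}_{\mathbb{Q}(\sqrt{2q})}$ generating the prime above $q$. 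One then verifies
\[
(\sqrt{p})\mathcal{O}_{K} = \Pi_{p}, \qquad (\alpha_{q})\mathcal{O}_{K} = \Pi_{q}, \qquad (\beta_{p})\mathcal{O}_{K} = \mathfrak{P}^{2},
\]
which contribute the classes $(0,1,0),\ (0,0,1),\ (2,0,0)$ in $\mathcal{C}$. These already generate a subgroup of order $8$, so at this stage we know $|Po(K)| \leq 2$ and $Po(K)$ is (at most) generated by the class of $\mathfrak{P}$.

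The delicate step, and the main obstacle, is to show that $\mathfrak{P}$ itself is principal; equivalently, to produce $\gamma \in \mathcal{O}_{K}$ with $|N_{K/\mathbb{Q}}(\gamma)| = 2$. This is exactly where the stronger congruence $p \equiv q \equiv 3 \pmod{8}$ (rather than just $\pmod{4}$) is essential: it enforces $(2/p) = (2/q) = -1$ and $pq \equiv 1 \pmod{8}$, which together pin down the $2$-adic behaviour of units of $K$. My approach would be to construct $\gamma$ out of $\beta_{p}$, $\alpha_{q}$, and the fundamental unit $\epsilon_{0}$ of $\mathbb{Q}(\sqrt{2pq})$; note that $\epsilon_{0}$ has norm $+1$ since $p,q \equiv 3 \pmod{4}$ rules out any solution to $x^{2} - 2pq\,y^{2} = -1$. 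Concretely, one hunts for a unit $u \in \mathcal{O}_{K}^{*}$ such that $\beta_{p}u$ is a square in $K$; any square root then generates $\mathfrak{P}$. The $\pmod{8}$ hypotheses are precisely what make such a $u$ exist (e.g., by matching up the local norm symbols of $\beta_{p}$ with the units coming from the three subfields). Carrying this verification through completes the computation of $Po(K) = 0$ and proves that $K$ is a P\'{o}lya field.
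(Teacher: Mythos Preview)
The paper does not supply its own proof of this statement; it merely quotes the result from \cite{rajaei-jnt}. So there is no argument in the present paper to compare against, and your proposal must be judged on its own merits.

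Your reduction via Theorem~\ref{zmt} is correct and well executed up to the point where you have shown $|Po(K)|\le 2$: the ramification indices $e_2=4$, $e_p=e_q=2$ are right, and the principality of $\Pi_{p^{f_p}}$, $\Pi_{q^{f_q}}$ and $\mathfrak{P}^{2}$ follows exactly as you indicate from the P\'olya property of $\mathbb{Q}(\sqrt{p})$ and $\mathbb{Q}(\sqrt{2q})$. This much is solid.

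The gap is the final step. You correctly identify that everything hinges on the principality of $\mathfrak{P}$, and you correctly reformulate this as the existence of a unit $u\in\mathcal{O}_K^{*}$ with $\beta_p u\in (K^{*})^{2}$. But from that point on the argument becomes programmatic rather than a proof: phrases like ``one hunts for a unit $u$'' and ``the $\pmod 8$ hypotheses are precisely what make such a $u$ exist'' are assertions, not verifications. The whole content of the theorem lies here; the congruences $p\equiv q\equiv 3\pmod 8$ have not yet been used in any essential way (only $p\equiv q\equiv 3\pmod 4$ was needed so far), and indeed Leriche's necessary conditions in Theorem~\ref{bi-quadratic-polya-classification} show that for other residues modulo $8$ the field $\mathbb{Q}(\sqrt{p},\sqrt{2q})$ can fail to be P\'olya. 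To complete the argument one must actually compute: either determine the Hasse unit index $[\mathcal{O}_K^{*}:\mathcal{O}_{k_1}^{*}\mathcal{O}_{k_2}^{*}\mathcal{O}_{k_3}^{*}]$ via Kubota's description of biquadratic units and show it forces $|H^{1}(G,\mathcal{O}_K^{*})|=16$, or exhibit an explicit $\gamma$ of norm $\pm 2$. Until that computation is carried out, the proof is incomplete.
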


\begin{theorem} \cite[Theorem 3.3]{rajaei-jnt}
Let $p$ and $q$ be two prime numbers with $p \equiv 1 \pmod {4}$ and $q \equiv 3 \pmod {4}$. If $\left(\frac{q}{p}\right) = -1$, then $K = \mathbb{Q}(\sqrt{-p},\sqrt{-q})$ is a P\'{o}lya field.
\end{theorem}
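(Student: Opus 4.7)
The plan is to apply Zantema's Theorem~\ref{zmt} to $K=\mathbb{Q}(\sqrt{-p},\sqrt{-q})$ and reduce the triviality of $Po(K)$ to a cohomology calculation. First, I would identify the ramified primes. The three quadratic subfields $K_{1}=\mathbb{Q}(\sqrt{-p})$, $K_{2}=\mathbb{Q}(\sqrt{-q})$, $K_{3}=\mathbb{Q}(\sqrt{pq})$ have discriminants $-4p$, $-q$, $4pq$ respectively (using $p\equiv 1$ and $q\equiv 3\pmod{4}$, so $pq\equiv 3\pmod{4}$), so the primes ramifying in $K/\mathbb{Q}$ are exactly $\{2,p,q\}$. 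Each of them is unramified in at least one of the $K_{i}$'s (namely $2$ and $p$ in $K_{2}$, and $q$ in $K_{1}$), so its inertia in $G=\mathrm{Gal}(K/\mathbb{Q})\cong(\mathbb{Z}/2\mathbb{Z})^{2}$ lies in a proper subgroup of order $2$, forcing $e_{2}=e_{p}=e_{q}=2$. Theorem~\ref{zmt} then gives
\[
0\to H^{1}(G,\mathcal{O}_{K}^{*})\to(\mathbb{Z}/2\mathbb{Z})^{3}\to Po(K)\to 0,
\]
so it suffices to show $|H^{1}(G,\mathcal{O}_{K}^{*})|=8$.

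Next I would analyse the unit group. Since $K$ is totally complex of signature $(0,2)$, $\mathrm{rank}(\mathcal{O}_{K}^{*})=1$ and the free part is carried by the real subfield $K_{3}$. Because $pq\equiv 3\pmod{4}$, the equation $x^{2}-pqy^{2}=-1$ has no solution modulo $4$, so the fundamental unit $\varepsilon_{0}$ of $K_{3}$ has norm $+1$. The crux is to prove $\mathcal{O}_{K}^{*}=W_{K}\langle\varepsilon_{0}\rangle$, i.e.\ the Hasse unit index is $1$. Writing $K=K_{3}(\sqrt{-p})$, a hypothetical $\eta\in\mathcal{O}_{K}^{*}$ with $\eta^{2}=\pm\varepsilon_{0}$ takes the form $x+y\sqrt{-p}$ with $xy=0$; the case $\eta\in K_{3}$ is excluded by the fundamentality and positivity of $\varepsilon_{0}$, while $\eta=y\sqrt{-p}$ gives $y^{2}=\mp\varepsilon_{0}/p\in K_{3}$. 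Clearing denominators in $y=(a+b\sqrt{pq})/d$, reducing the resulting integer relation modulo $p$, and using $\left(\tfrac{-1}{p}\right)=1$, one arrives at $\left(\tfrac{-q}{p}\right)=1$, i.e.\ $\left(\tfrac{q}{p}\right)=1$, contradicting the hypothesis; the case $\eta=y'\sqrt{-q}$ (using $K=K_{3}(\sqrt{-q})$) is handled symmetrically, again running into $\left(\tfrac{q}{p}\right)=-1$ via quadratic reciprocity.

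Once this is established, $\mathcal{O}_{K}^{*}=W_{K}\oplus\langle\varepsilon_{0}\rangle$ splits $G$-equivariantly (generically $W_{K}=\{\pm 1\}$ with trivial action). Writing $G=\langle\sigma,\tau\rangle$ with $\sigma$, $\tau$ fixing $K_{2}$, $K_{1}$ respectively, $N_{K_{3}/\mathbb{Q}}(\varepsilon_{0})=1$ forces $\sigma(\varepsilon_{0})=\tau(\varepsilon_{0})=\varepsilon_{0}^{-1}$ and $(\sigma\tau)(\varepsilon_{0})=\varepsilon_{0}$. Then $H^{1}(G,\{\pm 1\})=\mathrm{Hom}(G,\mathbb{Z}/2\mathbb{Z})\cong(\mathbb{Z}/2\mathbb{Z})^{2}$, and the inflation--restriction sequence for $H=\langle\sigma\tau\rangle$ (which fixes $\varepsilon_{0}$, with $G/H$ acting on $\langle\varepsilon_{0}\rangle^{H}=\mathbb{Z}$ by inversion) yields $H^{1}(G,\langle\varepsilon_{0}\rangle)\cong H^{1}(G/H,\mathbb{Z}^{-})\cong\mathbb{Z}/2\mathbb{Z}$, since $H^{1}(H,\mathbb{Z})=0$. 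Multiplying gives $|H^{1}(G,\mathcal{O}_{K}^{*})|=4\cdot 2=8$, whence $Po(K)=0$. The main obstacle is the Hasse unit index step, where the hypothesis $\left(\tfrac{q}{p}\right)=-1$ is essential; the exceptional case $q=3$ (for which $W_{K}=\mu_{6}$) requires a small additional computation that does not affect the conclusion.
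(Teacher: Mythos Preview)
The survey paper does not supply its own proof of this theorem; it merely quotes it from Heidaryan--Rajaei \cite{rajaei-jnt}. So there is no ``paper's proof'' to compare against here, and your proposal must be judged on its own merits.

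Your strategy is sound and, in fact, is the natural one given the machinery developed in the survey: invoke Theorem~\ref{zmt}, identify the ramified primes $\{2,p,q\}$ with $e_2=e_p=e_q=2$, and then show $|H^{1}(G,\mathcal{O}_K^{*})|=8$. Your identification of the ramification data is correct (using $-p\equiv 3$, $-q\equiv 1$, $pq\equiv 3\pmod 4$), and the cohomology computation at the end is clean: once $\mathcal{O}_K^{*}=W_K\times\langle\varepsilon_0\rangle$ with the stated $G$-action, the splitting $H^{1}(G,\{\pm 1\})\times H^{1}(G,\langle\varepsilon_0\rangle)\cong(\mathbb{Z}/2\mathbb{Z})^{2}\times\mathbb{Z}/2\mathbb{Z}$ goes through exactly as you say. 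The $q=3$ case indeed causes no trouble, since $H^{1}(G,\mu_3)=0$ for this action.

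The one place that deserves a little more care is the Hasse unit index step. Your sketch is right in spirit, but ``clearing denominators and reducing mod $p$'' hides the key computation. A cleaner route: if $p\varepsilon_0=\gamma^{2}$ with $\gamma\in\mathcal{O}_{K_3}$, write $\gamma=u+v\sqrt{pq}$; then $p\mid u$, say $u=pw$, and expanding $\gamma^{2}=p\varepsilon_0$ together with $A^{2}-pqB^{2}=1$ forces $(pw^{2}-qv^{2})^{2}=1$. Reducing $pw^{2}-qv^{2}=\pm 1$ modulo $p$ gives $\left(\tfrac{q}{p}\right)=1$ in either case, the desired contradiction. (Your separate treatment of $\eta=y'\sqrt{-q}$ is actually redundant, since $-p$ and $-q$ represent the same class in $K_{3}^{*}/(K_{3}^{*})^{2}$.) With that detail filled in, the argument is complete.
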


Recently, the authors proved the following theorems, extending Theorem \ref{to-be-referred-to}. The motivation to consider these fields essentially comes from an earlier work on {\it Euclidean ideal class} in \cite{self-jnt}. The notion of an Euclidean ideal class, abbreviated as EIC for the sake of brevity, was introduced by Lenstra in \cite{lenstra} to extend the notion of Euclidean domain for number fields. He proved that if a number field $K$ has an EIC, then the class group $Cl_{K}$ is cyclic. He further proved, assuming the validity of the Extended Riemann Hypothesis, that the converse also holds true. To learn about the recent developments in this area, we refer the reader to the references listed in \cite{self-jnt}. 

\begin{theorem} \cite[Theorem 1]{self-arxiv}\label{main-1} 
Let $p,q$ and $r$ be distinct prime numbers with $p \equiv 3 \pmod {4}$ and $q \equiv r \equiv 1 \pmod {8}$. Assume that $\left(\frac{q}{r}\right) = -1$, where $\left(\frac{\cdot}{r}\right)$ stands for the Legendre symbol. Then for the bi-quadratic field $K^{\prime} = \mathbb{Q}(\sqrt{p},\sqrt{qr})$, we have $Po(K^{\prime}) \simeq \mathbb{Z}/2\mathbb{Z}$. In particular, $K^{\prime}$ is not a P\'{o}lya field.
\end{theorem}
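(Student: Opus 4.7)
The strategy is to apply Zantema's Theorem \ref{zmt} to the biquadratic extension $K'/\mathbb{Q}$, whose Galois group is $G \cong (\mathbb{Z}/2\mathbb{Z})^{2}$. The first step is to locate the ramified primes. The three intermediate quadratic subfields are $\mathbb{Q}(\sqrt{p})$, $\mathbb{Q}(\sqrt{qr})$ and $\mathbb{Q}(\sqrt{pqr})$; from $p \equiv 3 \pmod 4$ and $q \equiv r \equiv 1 \pmod 8$ (hence $qr \equiv 1 \pmod 8$ and $pqr \equiv 3 \pmod 4$), their discriminants are $4p$, $qr$ and $4pqr$ respectively. Each of the primes $2, p, q, r$ divides exactly two of these three discriminants; a simple inertia-group argument then shows that each of them ramifies in $K'/\mathbb{Q}$ with ramification index $e_{i}=2$. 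Hence there are exactly four ramified primes, all with $e_{i}=2$, and the middle term in the exact sequence of Theorem \ref{zmt} is $(\mathbb{Z}/2\mathbb{Z})^{4}$, of order $16$.

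Reducing to cohomology, it now suffices to show $|H^{1}(G, \mathcal{O}_{K'}^{*})| = 8$, since this forces $|Po(K')| = 16/8 = 2$. Up to a Hasse unit index $Q_{K'} \in \{1,2\}$, the group $\mathcal{O}_{K'}^{*}$ is generated by $-1$ together with the fundamental units $\varepsilon_{p}, \varepsilon_{qr}, \varepsilon_{pqr}$ of the three quadratic subfields. The congruences $p \equiv 3 \pmod 4$ and $pqr \equiv 3 \pmod 4$ immediately give $N(\varepsilon_{p}) = N(\varepsilon_{pqr}) = +1$, since the norm equation $x^{2}-dy^{2}=-1$ has no solutions when $d$ has a prime factor congruent to $3$ modulo $4$. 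The remaining norm $N(\varepsilon_{qr})$ is to be controlled by the further hypotheses.

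The conditions $q \equiv r \equiv 1 \pmod 8$ together with $\left(\frac{q}{r}\right)=-1$ are exactly what is needed to pin down $N(\varepsilon_{qr})$ and the precise $G$-module structure of $\mathcal{O}_{K'}^{*}$: using Scholz-type criteria (or a direct $2$-descent in $\mathbb{Q}(\sqrt{qr})$) one decides whether $N(\varepsilon_{qr}) = \pm 1$, and quadratic reciprocity combined with the $\pmod 8$ congruences determines whether $K'$ carries a genuinely new unit $\xi$ beyond $\langle -1, \varepsilon_{p}, \varepsilon_{qr}, \varepsilon_{pqr}\rangle$. Plugging the resulting data into the standard $1$-cocycle description of $H^{1}(G, \mathcal{O}_{K'}^{*})$ for $G = \langle \sigma, \tau \rangle$ (or, equivalently, into the Herbrand quotient formula together with $|\widehat{H}^{0}|$) yields the target equality $|H^{1}(G, \mathcal{O}_{K'}^{*})| = 8$.

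The main obstacle is this last cohomology calculation: while the ramification count is almost automatic, the cohomology requires a careful balance between the Hasse index $Q_{K'}$, the three norm signs, and the specific generators of $\mathcal{O}_{K'}^{*}$ as a $G$-module. The improved $\pmod 8$ congruence on $q$, strengthening the $q \equiv 1 \pmod 4$ in Theorem \ref{to-be-referred-to}(1), is what eliminates the extra Legendre-symbol hypothesis $\left(\frac{p}{r}\right) = 1$: it forces $2$ to split in $\mathbb{Q}(\sqrt{q})$ and $\mathbb{Q}(\sqrt{qr})$, rigidifying the unit structure. Once $|H^{1}(G, \mathcal{O}_{K'}^{*})| = 8$ is established, Zantema's sequence concludes $Po(K') \simeq \mathbb{Z}/2\mathbb{Z}$, so $K'$ is not a P\'{o}lya field.
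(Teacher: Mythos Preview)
This survey paper does not contain a proof of Theorem~\ref{main-1}; the result is only quoted from the authors' separate preprint \cite{self-arxiv}. Consequently there is no ``paper's own proof'' here to compare against, and your write-up can only be assessed on its internal merits and against the expected method.

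Your overall strategy is the right one and is almost certainly the one used in \cite{self-arxiv}: feed the extension $K'/\mathbb{Q}$ into Zantema's exact sequence (Theorem~\ref{zmt}), count the ramified primes and their indices to get $\bigoplus_i \mathbb{Z}/e_i\mathbb{Z} \simeq (\mathbb{Z}/2\mathbb{Z})^4$, and then compute $|H^{1}(G,\mathcal{O}_{K'}^{*})|$. Your ramification analysis is correct: the discriminants of the three quadratic subfields are $4p$, $qr$, $4pqr$, each of $2,p,q,r$ ramifies with $e=2$ (the prime $2$ is unramified in $\mathbb{Q}(\sqrt{qr})$ since $qr\equiv 1\pmod 8$, so its inertia group cannot be all of $G$), and the middle term has order $16$.

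The genuine gap is that you do not actually carry out the step you yourself call ``the main obstacle''. You need $|H^{1}(G,\mathcal{O}_{K'}^{*})|=8$, but the proposal only gestures at how to get it: you say one ``decides'' $N(\varepsilon_{qr})$ via Scholz-type criteria, one ``determines'' whether a new unit $\xi$ exists, and then ``plugging the resulting data in \ldots\ yields'' the answer. None of these decisions is made, no explicit $G$-module structure for $\mathcal{O}_{K'}^{*}$ is written down, and no cocycle or Herbrand-quotient computation is performed. In a proof this is precisely the content: the norm signs $N(\varepsilon_{p})=N(\varepsilon_{pqr})=+1$ are easy, but pinning down $N(\varepsilon_{qr})$, the Hasse index $Q_{K'}$, and then the cohomology requires real work (typically Kubota's classification of unit groups in biquadratic fields, or an explicit Kuroda-type formula), and it is exactly here that the hypotheses $q\equiv r\equiv 1\pmod 8$ and $\bigl(\tfrac{q}{r}\bigr)=-1$ must be used. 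As written, the proposal is a plausible outline rather than a proof.
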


\begin{theorem} \cite[Theorem 2]{self-arxiv}\label{NEW-TH}
Let $p,q$ and $r$ be distinct prime numbers with $p \equiv q \equiv 3 \pmod {4}$ and $r \equiv 1 \pmod {8}$. Assume that $\left(\frac{p}{r}\right) = 1$ and $\left(\frac{q}{r}\right) = -1$. Then for the the bi-quadratic field $F = \mathbb{Q}(\sqrt{p},\sqrt{qr})$, we have $Po(F) \simeq \mathbb{Z}/2\mathbb{Z}$. In particular, $F$ is not a P\'{o}lya field.
\end{theorem}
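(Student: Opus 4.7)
The plan is to apply Zantema's theorem (Theorem \ref{zmt}) to $F = \mathbb{Q}(\sqrt{p}, \sqrt{qr})$ with Galois group $G = \mathrm{Gal}(F/\mathbb{Q}) \simeq (\mathbb{Z}/2\mathbb{Z})^{2}$, whose three quadratic subfields are $K_{1} = \mathbb{Q}(\sqrt{p})$, $K_{2} = \mathbb{Q}(\sqrt{qr})$, and $K_{3} = \mathbb{Q}(\sqrt{pqr})$. From $p \equiv q \equiv 3 \pmod{4}$ and $r \equiv 1 \pmod{8}$, one reads off $qr \equiv 3 \pmod{4}$ and $pqr \equiv 1 \pmod{4}$, so the discriminants of $K_{1}, K_{2}, K_{3}$ are $4p$, $4qr$, and $pqr$ respectively, and the primes ramifying in $F/\mathbb{Q}$ are exactly $\{2, p, q, r\}$. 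A case analysis shows each of $p, q, r$ ramifies in precisely two of the quadratic subfields, while $2$ ramifies in $K_{1}$ and $K_{2}$ but is unramified in $K_{3}$; in each case the inertia subgroup in $G$ is forced to have order $2$. Consequently $e_{2} = e_{p} = e_{q} = e_{r} = 2$, $t = 4$, and Zantema's exact sequence becomes
\begin{equation*}
0 \to H^{1}(G, \mathcal{O}_{F}^{*}) \to (\mathbb{Z}/2\mathbb{Z})^{4} \to Po(F) \to 0.
\end{equation*}
In particular $Po(F)$ is an elementary abelian $2$-group, so to conclude $Po(F) \simeq \mathbb{Z}/2\mathbb{Z}$ it suffices to prove $|H^{1}(G, \mathcal{O}_{F}^{*})| = 8$.

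The next step is to pin down the norms of the fundamental units $\epsilon_{1}, \epsilon_{2}, \epsilon_{3}$ of $K_{1}, K_{2}, K_{3}$. Since $p \equiv 3 \pmod{4}$ blocks solvability of $x^{2} \equiv -1 \pmod{p}$, the Pell-like equation forces $N_{K_{1}/\mathbb{Q}}(\epsilon_{1}) = +1$; the same argument using $q \equiv 3 \pmod{4}$ dividing $qr$ gives $N_{K_{2}/\mathbb{Q}}(\epsilon_{2}) = +1$, and using $p \equiv 3 \pmod{4}$ dividing $pqr$ gives $N_{K_{3}/\mathbb{Q}}(\epsilon_{3}) = +1$. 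Proposition \ref{hilb} then yields $|Po(K_{1})| = 1$ and $|Po(K_{2})| = |Po(K_{3})| = 2$; this is the starting information fed into the cohomological calculation.

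To compute $H^{1}(G, \mathcal{O}_{F}^{*})$, one applies an inflation-restriction sequence for one of the order-$2$ subgroups $\mathrm{Gal}(F/K_{i}) \subseteq G$ together with the explicit description of the unit group as a rank-$3$ $G$-module. The crucial invariant is the unit index $Q := [\mathcal{O}_{F}^{*} : \langle -1, \epsilon_{1}, \epsilon_{2}, \epsilon_{3}\rangle] \in \{1, 2\}$, which measures whether some product of the $\epsilon_{i}$ becomes a square in $\mathcal{O}_{F}^{*}$. The hypotheses $r \equiv 1 \pmod{8}$, $\left(\frac{p}{r}\right) = 1$, and $\left(\frac{q}{r}\right) = -1$ control exactly this question through quadratic reciprocity and the splitting behaviour of $r$ in the quadratic subfields; they decide the solvability of the underlying norm equations and hence fix $Q$. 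Together with the norm data recorded above, this gives $|H^{1}(G, \mathcal{O}_{F}^{*})| = 8$ and therefore $|Po(F)| = 2$.

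The hardest step is the cohomology computation: since $G = V_{4}$ is non-cyclic, one cannot invoke a simple Herbrand quotient, and one must carefully package the unit index $Q$ and the Galois action on $\mathcal{O}_{F}^{*}$ into the correct cohomological size. This is where the Legendre symbol hypotheses are indispensable. The argument parallels that of Theorem \ref{main-1}; the only new input is replacing $q \equiv 1 \pmod{4}$ by $q \equiv 3 \pmod{4}$, which alters the norm of $\epsilon_{2}$ and the ramification of $2$, but the cohomological bookkeeping remains analogous.
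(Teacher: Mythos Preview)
This survey does not contain a proof of Theorem~\ref{NEW-TH}; the result is only quoted from the authors' preprint \cite{self-arxiv}, so there is no argument in the present paper against which your proposal can be directly compared. The framework you adopt (Zantema's exact sequence, Theorem~\ref{zmt}) is exactly the tool the survey highlights for such results, and is almost certainly the approach of \cite{self-arxiv} as well.

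Evaluating the proposal on its own merits: the reduction via Theorem~\ref{zmt} is set up correctly. Your identification of the ramified primes $\{2,p,q,r\}$, of the ramification indices $e_{2}=e_{p}=e_{q}=e_{r}=2$, and of the norms $N(\epsilon_{i})=+1$ for the fundamental units of the three quadratic subfields is accurate, as are the resulting values $|Po(K_{1})|=1$, $|Po(K_{2})|=|Po(K_{3})|=2$ from Proposition~\ref{hilb}.

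The genuine gap is the step you yourself flag as ``the hardest'': you never actually compute $|H^{1}(G,\mathcal{O}_{F}^{*})|=8$. You assert that the Legendre conditions $\left(\tfrac{p}{r}\right)=1$, $\left(\tfrac{q}{r}\right)=-1$ ``fix $Q$'' and that ``together with the norm data \dots\ this gives $|H^{1}(G,\mathcal{O}_{F}^{*})|=8$'', but neither the value of $Q$ nor the passage from $(Q,N(\epsilon_{1}),N(\epsilon_{2}),N(\epsilon_{3}))$ to $|H^{1}|$ is made explicit. This is precisely the place where the hypotheses on Legendre symbols must do real work, so hand-waving here leaves the heart of the argument unproved. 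To close the gap you would need, for instance, Kubota-type formulas for $|H^{1}(G,\mathcal{O}_{F}^{*})|$ in a $V_{4}$-extension in terms of the $N(\epsilon_{i})$ and the unit index $Q$, together with an explicit determination of $Q$ (or of which products $\epsilon_{i}\epsilon_{j}$ become squares in $F$) from the splitting behaviour forced by the assumed Legendre symbols.
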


\begin{theorem} \cite[Theorem 3]{self-arxiv}\label{main-2}
Let $p$ and $q$ be distinct prime numbers with $p \equiv q \equiv 1 \pmod {4}$. Assume that $\left(\frac{p}{q}\right) = -1$. Then for the bi-quadratic field $K^{\prime \prime} = \mathbb{Q}(\sqrt{2},\sqrt{pq})$, we have $Po(K^{\prime \prime}) \simeq \mathbb{Z}/2\mathbb{Z}$. In particular, $K^{\prime \prime}$ is not a P\'{o}lya field.
\end{theorem}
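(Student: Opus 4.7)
The plan is to exploit Zantema's exact sequence from Theorem \ref{zmt} applied to $K^{\prime\prime}$, collapse two of the three resulting generators using explicit principal elements, and then rule out triviality via the compositum part of \cite[Theorem 3.4]{zantema}. Let $G = {\rm Gal}(K^{\prime\prime}/\mathbb{Q}) \cong (\mathbb{Z}/2\mathbb{Z})^{2}$. Since $G$ has exponent $2$, any rational prime that ramifies in $K^{\prime\prime}/\mathbb{Q}$ has ramification index exactly $2$, and examining the three quadratic subfields $\mathbb{Q}(\sqrt{2})$, $\mathbb{Q}(\sqrt{pq})$, $\mathbb{Q}(\sqrt{2pq})$ (whose discriminants, using $p \equiv q \equiv 1 \pmod 4$, are $8$, $pq$, $8pq$) shows that the set of ramified rational primes is precisely $\{2, p, q\}$. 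Hence \eqref{exact-equn} specialises to
\[
0 \to H^{1}(G, \mathcal{O}_{K^{\prime\prime}}^{*}) \to (\mathbb{Z}/2\mathbb{Z})^{3} \to Po(K^{\prime\prime}) \to 0,
\]
so $Po(K^{\prime\prime})$ is generated by the three classes $[\Pi_{2^{f_{2}}}(K^{\prime\prime})]$, $[\Pi_{p^{f_{p}}}(K^{\prime\prime})]$, $[\Pi_{q^{f_{q}}}(K^{\prime\prime})]$, each of order dividing $2$.

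Two of these generators can be eliminated using the fact that both $\sqrt{2}$ and $\sqrt{pq}$ lie in $K^{\prime\prime}$. At every prime $\mathfrak{p}$ of $K^{\prime\prime}$ above $2$ we have $v_{\mathfrak{p}}(2) = 2$ and hence $v_{\mathfrak{p}}(\sqrt{2}) = 1$, so the principal ideal $(\sqrt{2})\mathcal{O}_{K^{\prime\prime}}$ equals $\Pi_{2^{f_{2}}}(K^{\prime\prime})$, forcing $[\Pi_{2^{f_{2}}}(K^{\prime\prime})] = 1$ in $Po(K^{\prime\prime})$. The same local valuation calculation applied to $\sqrt{pq}$ gives $(\sqrt{pq})\mathcal{O}_{K^{\prime\prime}} = \Pi_{p^{f_{p}}}(K^{\prime\prime}) \cdot \Pi_{q^{f_{q}}}(K^{\prime\prime})$, hence $[\Pi_{p^{f_{p}}}(K^{\prime\prime})] = [\Pi_{q^{f_{q}}}(K^{\prime\prime})]$ in $Po(K^{\prime\prime})$. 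Together these two relations show that $Po(K^{\prime\prime})$ is cyclic of order at most $2$.

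To produce the matching lower bound I would show that $K^{\prime\prime}$ itself is not a P\'olya field. The hypothesis $\left(\frac{p}{q}\right) = -1$ together with $p \equiv q \equiv 1 \pmod 4$ is a classical criterion (provable either by a Hilbert symbol analysis of the equation $x^{2} - pqy^{2} = -1$, or via genus theory for $\mathbb{Q}(\sqrt{pq})$) guaranteeing that the fundamental unit of $\mathbb{Q}(\sqrt{pq})$ has norm $-1$. Proposition \ref{hilb} then yields $|Po(\mathbb{Q}(\sqrt{pq}))| = 2^{r-1} = 2$, so $\mathbb{Q}(\sqrt{pq})$ is not P\'olya. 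Applying part (2) of \cite[Theorem 3.4]{zantema} with $K_{1} = \mathbb{Q}(\sqrt{2})$, $K_{2} = \mathbb{Q}(\sqrt{pq})$, and $K_{1} \cap K_{2} = \mathbb{Q}$ (trivially a P\'olya field), if $K^{\prime\prime} = K_{1}K_{2}$ were P\'olya then $\mathbb{Q}(\sqrt{pq})$ would have to be P\'olya, contradicting the previous sentence. Thus $|Po(K^{\prime\prime})| \geq 2$, and combining with the upper bound of the previous step gives $Po(K^{\prime\prime}) \cong \mathbb{Z}/2\mathbb{Z}$.

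The principal anticipated obstacle is the nontriviality step: invoking that $\left(\frac{p}{q}\right) = -1$ forces $N(\epsilon_{pq}) = -1$ is classical, but needs either a precise reference or a short self-contained local-global argument using Hilbert symbols. Once Zantema's exact sequence is in place, the first two steps are essentially bookkeeping with ramification indices and local valuations.
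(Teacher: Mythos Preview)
The survey does not actually prove Theorem~\ref{main-2}; it only quotes the statement from \cite{self-arxiv}, so there is no in-paper argument to compare against. That said, your proposal is correct and is assembled from precisely the two ingredients the survey showcases for this circle of results: Zantema's exact sequence (Theorem~\ref{zmt}) to bound $|Po(K'')|$ from above, and the compositum criterion \cite[Theorem~3.4]{zantema} combined with Proposition~\ref{hilb} to get the lower bound. One small remark on the ramification step: the sentence ``since $G$ has exponent $2$'' alone does not exclude $e_{2}=4$ at the wildly ramified prime $2$; the clean reason all three indices equal $2$ is that each of $2,p,q$ is unramified in one of the quadratic subfields.

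On the point you flag as the main obstacle: rather than first establishing $N(\varepsilon_{pq})=-1$ and then feeding it into Proposition~\ref{hilb}, you can reach $|Po(\mathbb{Q}(\sqrt{pq}))|\ge 2$ directly. If the ramified prime $\mathfrak{p}\mid p$ in $\mathbb{Q}(\sqrt{pq})$ were principal, say $\mathfrak{p}=\bigl(\tfrac{a+b\sqrt{pq}}{2}\bigr)$, then $a^{2}-pq\,b^{2}=\pm 4p$; reducing modulo $q$ and using that $-1$ is a square modulo $q$ (since $q\equiv 1\pmod 4$) forces $\left(\tfrac{p}{q}\right)=1$, contradicting the hypothesis. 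Hence $[\Pi_{p}(\mathbb{Q}(\sqrt{pq}))]\neq 1$, so $\mathbb{Q}(\sqrt{pq})$ is not P\'olya, and your descent via \cite[Theorem~3.4(2)]{zantema} finishes the argument without appealing to an outside reference for the norm of the fundamental unit.
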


It is worthwhile to mention that the bi-quadratic fields listed in Theorem \ref{main-1} - Theorem \ref{main-2} are known to have an EIC whenever class groups of these fields are all isomorphic to $\mathbb{Z}/2\mathbb{Z}$ \cite{self-jnt}. We also remark here that some allied questions have been dealt with in the very recent article by Tougma in \cite{recent-jnt}.

\section{Large P\'{o}lya groups of bi-quadratic and quintic fields}

Theorem \ref{zmt} can be used to construct finite Galois extensions $K$ of $\mathbb{Q}$ with large P\'{o}lya groups. The fundamental idea consists of two parts. First, one needs to construct $K$ with its discriminant $d_{K}$ having many prime factors so that the term $\displaystyle\bigoplus_{i = 1}^{t}\mathbb{Z}/e_{i}\mathbb{Z}$ in the exact sequence \eqref{exact-equn} becomes {\it large}. Then one needs to ensure that the cohomology group $H^{1}(G,\mathcal{O}_{K}^{*})$ is {\it small}. Exploiting these ideas, the authors constructed an infinite set of totally real bi-quadratic fields having large P\'{o}lya groups. We state the result as follows.

\begin{theorem} \cite[Theorem 1]{self-rnt}\label{RNT-thm}
For any integer $n \geq 1$, there exist infinitely many totally real bi-quadratic fields with P\'{o}lya groups being isomorphic to $(\mathbb{Z}/2\mathbb{Z})^{n}$.
\end{theorem}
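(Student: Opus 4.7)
My plan is to apply Zantema's Theorem \ref{zmt} to a family of totally real bi-quadratic fields $K = \mathbb{Q}(\sqrt{a},\sqrt{b})$ with $a,b>0$. Since $G := {\rm{Gal}}(K/\mathbb{Q}) \cong (\mathbb{Z}/2\mathbb{Z})^2$ has every cyclic subgroup of order dividing $2$, the inertia subgroup at any odd ramified prime $p_i$ is cyclic of order at most $2$, forcing $e_i = 2$. Consequently, if the defining integers are chosen so that $2$ is unramified in $K/\mathbb{Q}$ and exactly $t$ odd rational primes ramify, then the middle term of the exact sequence \eqref{exact-equn} is precisely $(\mathbb{Z}/2\mathbb{Z})^t$, and $Po(K)$ is automatically an elementary abelian $2$-group of order $2^t / |H^1(G,\mathcal{O}_K^*)|$.

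To realise the prescribed rank $n$, I would fix a small square-free positive integer $a$ and take $K = \mathbb{Q}(\sqrt{a},\sqrt{p_1 \cdots p_r})$, where $p_1,\ldots,p_r$ vary over odd primes subject to congruence conditions modulo a fixed modulus. These congruences should enforce: (i) $2$ is unramified in $K$; (ii) each $p_i$ splits in $\mathbb{Q}(\sqrt{a})$ in a prescribed way so that the norms of the fundamental units of the three quadratic subfields $k_1 = \mathbb{Q}(\sqrt{a})$, $k_2 = \mathbb{Q}(\sqrt{p_1 \cdots p_r})$, $k_3 = \mathbb{Q}(\sqrt{a\, p_1 \cdots p_r})$ are prescribed; and (iii) the Kuroda-type unit index $[\mathcal{O}_K^* : \langle -1 \rangle \mathcal{O}_{k_1}^* \mathcal{O}_{k_2}^* \mathcal{O}_{k_3}^*]$ takes a fixed value across the family. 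With these in place so that $|H^1(G,\mathcal{O}_K^*)| = 2^c$ is constant and the number of ramified primes is $t = n + c$, Zantema's sequence yields $|Po(K)| = 2^n$; the elementary abelian structure of the middle term then forces $Po(K) \cong (\mathbb{Z}/2\mathbb{Z})^n$. Dirichlet's theorem on primes in arithmetic progressions supplies infinitely many admissible tuples $(p_1,\ldots,p_r)$ meeting the congruence conditions, giving infinitely many distinct such $K$.

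The main technical obstacle is the uniform evaluation of $|H^1(G,\mathcal{O}_K^*)|$. For a totally real bi-quadratic field, the unit group has rank $3$ and its cohomology is governed by the norms of the fundamental units of the three quadratic subfields together with the Kuroda index $\in \{1,2,4\}$; both of these ingredients depend delicately on reciprocity relations among the chosen primes. Stabilising them across an infinite family — so that $H^1(G,\mathcal{O}_K^*)$ has constant order rather than merely bounded order — is essentially a quadratic reciprocity bookkeeping exercise, and this is where the bulk of the work will lie. Once this stabilisation is in hand, the conclusion follows by feeding the count $t = n + c$ into Theorem \ref{zmt} and invoking Dirichlet for infinitude.
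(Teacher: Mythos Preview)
Your plan is sound and matches the approach the paper itself outlines: the survey does not reproduce a proof of this theorem but only sketches the strategy in the paragraph preceding the statement, namely to feed a bi-quadratic $K$ with many ramified primes into Zantema's exact sequence \eqref{exact-equn} while keeping $H^{1}(G,\mathcal{O}_{K}^{*})$ small, exactly as you propose. Your identification of the Kuroda unit index and the norms of the three quadratic fundamental units as the quantities to be stabilised via congruence conditions on the $p_i$ is precisely the content of the cited paper \cite{self-rnt}, so there is nothing to correct.
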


Since $Po(K)$ is a subgroup of $Cl_{K}$, it immediately follows that the class groups of each of the bi-quadratic fields in Theorem \ref{RNT-thm} contains a copy of $(\mathbb{Z}/2\mathbb{Z})^{n}$. In fact, there exist infinitely many totally real bi-quadratic fields whose ideal class groups have $2$-rank at least $n$ (cf. \cite[Corollary 1]{self-rnt}).

\smallskip

Unlike the case of bi-quadratic fields, not much is known about the largeness of $Po(K)$, where $K$ is a quintic number field. Recently, Mahapatra and Pandey \cite{ppp-paper} addressed this question for {\it Lehmer quintics}. We recall that for an integer $n \in \mathbb{Z}$, the Lehmer quintic polynomial $f_{n}(X) \in \mathbb{Z}[X]$ is defined by

\begin{align*}
f_{n}(X) := X^{5} + n^{2}X^{4} - (2n^{3} + 6n^{2} + 10n + 10)X^{3} + (n^{4} + 5n^{3} + 11n^{2} + 15n + 5)X^{2} \\ (n^{3} + 4n^{2} + 10n + 10)X + 1
\end{align*}
and a field $K_{n}$ is called a Lehmer quintic field if $K_{n} = \mathbb{Q}(\alpha_{n})$, where $\alpha_{n}$ is a root of $f_{n}(X)$. In \cite{ppp-paper}, the following theorem is proved for the P\'{o}lya groups of Lehmer quintic fields.

\begin{theorem} \cite[Theorem 1.4]{ppp-paper}\label{ppp}
Let $K_{n}$ be the sequence of Lehmer quintic fields as defined above. Assume that the integer $m_{n} = n^{4} + 5n^{3} 15n^{2} + 25n + 25$ is cube-free. Then the following hold true.
\begin{enumerate}
\item The P\'{o}lya group $Po(K_{n}) \simeq (\mathbb{Z}/5\mathbb{Z})^{\omega(m_{n}) - 1}$, where $\omega(k)$ counts the number of distinct prime divisors of $k$.

\item The field $K_{n}$ is a P\'{o}lya field if and only if either $m_{n}$ is a prime number or $m_{n} = 25$. 
\end{enumerate}
\end{theorem}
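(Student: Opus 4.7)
The strategy is to apply Zantema's Theorem~\ref{zmt} to the cyclic quintic extension $K_n/\mathbb{Q}$ together with the identity $|Po(K)| = 5^{r_K - 1}$ for any cyclic quintic field $K$, which was recorded earlier in the paper.

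The first step is structural. Since $K_n/\mathbb{Q}$ is Galois with cyclic Galois group $G \cong \mathbb{Z}/5\mathbb{Z}$ (a classical feature of the Lehmer parametrization, where the roots of $f_n(X)$ are permuted by an explicit $\mathbb{Q}$-rational M\"obius transformation), and since $|G|=5$ is prime, every ramified prime has ramification index $e_i=5$. The exact sequence in Theorem~\ref{zmt} then specialises to
\[
0 \to H^1(G,\mathcal{O}_{K_n}^*) \to (\mathbb{Z}/5\mathbb{Z})^{r_{K_n}} \to Po(K_n) \to 0,
\]
which realises $Po(K_n)$ as a quotient of an elementary abelian $5$-group. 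Comparing orders with $|Po(K_n)|=5^{r_{K_n}-1}$ yields $Po(K_n) \cong (\mathbb{Z}/5\mathbb{Z})^{r_{K_n}-1}$.

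The second step matches $r_{K_n}$ with $\omega(m_n)$. Using the discriminant formula for the Lehmer polynomial $f_n(X)$ (essentially a power of $m_n$ up to $5$-adic adjustments) together with the cube-free hypothesis, one argues that a rational prime $p$ is ramified in $K_n/\mathbb{Q}$ if and only if $p \mid m_n$. The prime $5$ is handled separately: since $m_n \equiv n^4 \pmod{5}$, one has $5 \mid m_n \Leftrightarrow 5 \mid n$, and an inspection of $f_n(X) \pmod{5}$ confirms wild ramification of $5$ exactly in that case. This finishes Part~(1).

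For Part~(2), $K_n$ is a P\'olya field iff $\omega(m_n)=1$, which under the cube-free hypothesis means $m_n=p$ or $m_n=p^2$ for some prime $p$. The first case is already the stated prime alternative. For the second, the goal is to show that $m_n$ can be a perfect square only at $n=0$ (giving $m_n=25=5^2$). This follows from the elementary squeeze
\[
(2n^2+5n+8)^2 \;<\; 4m_n \;<\; (2n^2+5n+9)^2,
\]
valid for $n \geq 12$ and for $n \leq -2$ (verified by noting the quadratic differences $3n^2+20n+36$ and $-n^2+10n+19$ have no real roots in the relevant ranges), which traps $4m_n$ strictly between two consecutive integer squares, combined with a direct numerical check over the finite range $-1 \leq n \leq 11$ in which $n=0$ is the unique square. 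The main obstacle is the ramification analysis in the middle step: one must carefully control the index $[\mathcal{O}_{K_n} : \mathbb{Z}[\alpha_n]]$, and it is precisely the cube-free hypothesis on $m_n$ that keeps this index coprime to $m_n$ and makes the identification $r_{K_n}=\omega(m_n)$ go through cleanly.
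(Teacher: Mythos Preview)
The paper does not supply its own proof of this theorem: it is quoted verbatim from \cite{ppp-paper} as Theorem~\ref{ppp} and no argument is given in the present survey. Consequently there is no ``paper's proof'' to compare against. What the paper does contain, and what your proposal correctly invokes, is (i) Zantema's exact sequence (Theorem~\ref{zmt}) and (ii) the remark, attributed to \cite{lem}, that $|Po(K)|=5^{r_K-1}$ for a cyclic quintic extension $K/\mathbb{Q}$. Your first step, combining these two facts to conclude $Po(K_n)\cong(\mathbb{Z}/5\mathbb{Z})^{r_{K_n}-1}$, is clean and is exactly the mechanism the survey would lead a reader to expect.

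Two comments on the substance of your sketch. First, the identification $r_{K_n}=\omega(m_n)$ is indeed the crux, and you correctly flag it as the main obstacle; the honest statement is that this step requires input from \cite{ppp-paper} (or from the earlier literature on Lehmer quintics) about the discriminant of $f_n$ and the index of $\mathbb{Z}[\alpha_n]$ in $\mathcal{O}_{K_n}$, and your sketch does not actually carry it out. Second, your squeeze argument for Part~(2) is essentially correct, with one sign to tidy: the upper difference is $(2n^2+5n+9)^2-4m_n=n^2-10n-19$, not $-n^2+10n+19$; this is positive precisely for $n\ge 12$ or $n\le -2$, matching the range you state, and the finite check over $-1\le n\le 11$ then isolates $n=0$ as claimed.
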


As a corollary to Theorem \ref{ppp}, they proved that the sequence of $5$-ranks of the class groups of the non-P\'{o}lya Lehmer quintic fields is unbounded (cf. \cite[Corollary 1.5]{ppp-paper}).
 
\section{Large P\'{o}lya groups of consecutive quadratic fields in the spirit of Iizuka's conjecture}

There have been quite a lot of interest in the question of simultaneous divisibility or indivisibility of class numbers of quadratic fields. This can be traced back to the work of Scholz \cite{scholz}, widely termed as {\it Scholz's reflection principle} where he provided an inequality of the $3$-ranks of the ideal class groups of the quadratic fields $\mathbb{Q}(\sqrt{d})$ and $\mathbb{Q}(\sqrt{-3d})$. Over the last couple of decades, there have been much progress towards generalizing Scholz's result for pairs of quadratic fields. We encourage the reader to look into \cite{byeon-PAMS}, \cite{ito}, \cite{kom-1} and \cite{kom-2} to learn about the recent developments in this regard.

\smallskip

In 2018, Iizuka \cite{iizuka} proved the existence of infinitely many pairs of imaginary quadratic fields of the form $\{\mathbb{Q}(\sqrt{d}),\mathbb{Q}(\sqrt{d + 1})\}$ with $d \in \mathbb{Z}$ such that the class numbers of all the fields are divisible by $3$. In the same paper, he proposed the following conjecture.

\begin{conj} \cite{iizuka}\label{iizuka-conj}
Let $k \geq 1$ be an integer and let $\ell$ be a prime number. Then there exist infinitely many tuples of quadratic fields, real or imaginary, of the form $\{\mathbb{Q}(\sqrt{d}),\ldots\mathbb{Q}(\sqrt{d + k})\}$ with $d \in \mathbb{Z}$ such that the class numbers of all of them are divisible by $\ell$.
\end{conj}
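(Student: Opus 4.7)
Since the stated assertion is a conjecture (no proof is known for general $k$ and $\ell$), any proposal necessarily takes the form of a strategic framework rather than a complete argument. The guiding principle is class field theory: $\ell \mid h(\mathbb{Q}(\sqrt{m}))$ precisely when $\mathbb{Q}(\sqrt{m})$ admits an unramified cyclic extension of degree $\ell$. Such extensions are classically produced, following Nagell, Ankeny--Chowla and Yamamoto, from representations of the form $m = a^{2} - \ell\, b^{\ell}$ with $\gcd(a, \ell b) = 1$ and mild congruence constraints: the ideal $(b, a - \sqrt{m})$ then has exact order $\ell$ in the class group, so this is a uniform device for forcing $\ell$-divisibility across both real and imaginary signatures.

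The plan is therefore to solve the simultaneous Diophantine system
\begin{equation*}
d + i \;=\; a_{i}^{2} - \ell\, b_{i}^{\ell}, \qquad i = 0, 1, \ldots, k,
\end{equation*}
in integers $d, a_{0}, b_{0}, \ldots, a_{k}, b_{k}$, and then to extract an infinite subfamily for which (a) the squarefree parts of the $d+i$ are pairwise distinct, so that the fields $\mathbb{Q}(\sqrt{d+i})$ are genuinely distinct quadratic fields; (b) the Nagell coprimality hypotheses hold at all primes dividing $2\ell$; and (c) the classes of the produced ideals are actually non-principal. Item (c) is the standard Nagell verification via a congruence analysis modulo $\ell$, and (a) can be enforced by a routine sieve once (b) is in place. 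Eliminating $d$ from the system reduces the task to finding integer points on the affine variety $V \subset \mathbb{A}^{2k+2}$ cut out by
\begin{equation*}
a_{i}^{2} - \ell\, b_{i}^{\ell} \;-\; a_{0}^{2} + \ell\, b_{0}^{\ell} \;=\; i, \qquad i = 1, \ldots, k.
\end{equation*}

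The main obstacle, and the reason the conjecture remains open for $k \geq 2$ or $\ell \geq 5$, is producing an infinite set of integer points on $V$. For $k = 1$ and $\ell = 3$ Iizuka resolved this by parametrising rational points on an associated elliptic pencil, and for the related pair statement his argument can be reverse-engineered from an explicit polynomial identity in one variable. For larger $k$ or $\ell$, $V$ has sufficient geometric complexity that no unconditional technique is known to produce a Zariski-dense set of integer points: a single polynomial identity $d = f(t) \in \mathbb{Z}[t]$ making every $f(t) + i$ factor in Nagell form over $\mathbb{Z}[t]$ appears unavailable, since the $\ell$-th-power terms cannot absorb $k+1$ distinct linear shifts simultaneously once $k \geq \ell$. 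A realistic path forward would combine three ingredients: an explicit parametric family controlling $k-1$ of the shifts, a rank argument on the elliptic or hyperelliptic fibration controlling the remaining shift, and a Chebotarev sieve ensuring squarefreeness; even so, the passage from finitely many sporadic examples to infinitely many remains the substantive, and as yet unsolved, content of Iizuka's conjecture.
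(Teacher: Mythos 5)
This statement is a conjecture, not a theorem: the paper states it (attributing it to Iizuka) without proof and explicitly notes that it ``remains wide open,'' recording only the partial results for $k=1$ with $\ell=3$ (Iizuka) and for $k=1$ with arbitrary odd prime $\ell$ (Krishnamoorthy--Pasupulati). You correctly recognize this, and your proposal is honestly framed as a strategic sketch rather than a proof, so there is no proof in the paper against which to measure it. Your outline --- forcing $\ell$-divisibility via Nagell-type representations $d+i = a_i^2 - \ell b_i^{\ell}$, reducing to integer points on the resulting affine variety, and identifying the production of infinitely many such points as the genuine obstruction --- is consistent with the methods in the literature the paper cites and gives a fair account of why the general case is out of reach. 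One small caution: your parenthetical claim that the ideal $(b, a-\sqrt{m})$ has \emph{exact} order $\ell$ requires the additional non-principality verification you defer to item (c), and in the real quadratic case this step is substantially harder than in the imaginary case (one must control the fundamental unit), which is precisely why even the $k=1$ real case resisted longer; your sketch would benefit from acknowledging that asymmetry explicitly. Since neither you nor the paper proves the statement, no further comparison is possible.
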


Iizuka proved that for $k = 1$ and $\ell = 3$, Conjecture \ref{iizuka-conj} holds true for imaginary quadratic fields. Later, Krishnamoorthy and Pasupulati settled the conjecture for $k = 1$ and for any odd prime $\ell$ in \cite{pisi}. Even though the conjecture remains wide open, there are certain recent works that establish the existence of particular types of triples, quadruples and $k$-tuples of quadratic fields whose class numbers satisfy certain divisibility conditions (cf. \cite{self-acta}, \cite{azi-da} and \cite{xie}). 

\smallskip

In \cite{imrn}, Cherubini et al. considered a question concerning the distribution of class numbers of real quadratic fields that goes along the spirit of Conjecture \ref{iizuka-conj}. Following the notations of \cite{imrn}, we call a sequence of quadratic fields $\{K_{i}\}_{i \geq 1}$ {\it consecutive} if each $K_{i}$ is given by $K_{i} = \mathbb{Q}(\sqrt{d + i})$ for some fixed integer $d$. In \cite{imrn}, consecutive real quadratic fields with {\it large} class numbers have been shown to exist. More precisely, the following theorem has been proven.

\begin{theorem} \cite{imrn}\label{imrn-thm}
For a fixed integer $k \geq 1$, there exist $\geq X^{\frac{1}{2} - o(1)}$ integers $d \leq X$ such that $$h_{\mathbb{Q}(\sqrt{d + j})} \gg_{k} \dfrac{\sqrt{d}}{\log d}\log \log d, ~ \mbox{ for all } ~ j = 1,\ldots,k$$ holds.
\end{theorem}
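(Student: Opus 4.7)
\emph{Approach.} The plan is to apply the analytic class number formula
\[
h_{K_j}\, \log \epsilon_{K_j} \;=\; \tfrac{1}{2}\sqrt{d_{K_j}}\, L(1, \chi_{K_j})
\]
to each $K_{j} = \mathbb{Q}(\sqrt{d+j})$ and to produce a family of $\gg X^{1/2 - o(1)}$ integers $d \leq X$ for which, simultaneously for every $j \in \{1, \ldots, k\}$, the fundamental unit $\epsilon_{K_j}$ is polynomially bounded in $d$ (so that $\log \epsilon_{K_j} \ll_{k} \log d$) and the residue $L(1, \chi_{K_j}) \gg \log \log d$. Combined with $\sqrt{d_{K_j}} \asymp \sqrt{d}$, these two inputs yield the stated lower bound on $h_{K_j}$.

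\emph{Construction.} I would parametrise $d = n^{2}$ with $n$ ranging over a dyadic interval around $\sqrt{X}$ and impose $L_{k} := \mathrm{lcm}(1, 2, \ldots, k) \mid 2n$. Then for each $j \leq k$ the element $\eta_{j} := n + \sqrt{n^{2}+j} \in \mathcal{O}_{K_j}$ has norm $-j$, and a short computation shows that
\[
\tfrac{\eta_{j}^{2}}{j} \;=\; \Bigl(\tfrac{2n^{2}}{j} + 1\Bigr) + \tfrac{2n}{j}\sqrt{n^{2}+j}
\]
is an algebraic integer of norm $+1$, hence a unit of $\mathcal{O}_{K_j}$; this forces $\log \epsilon_{K_j} \leq 2 \log n + O_{k}(1)$, which gives the small-unit condition. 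For the $L$-value side, I would additionally require that $n^{2} + j$ be a nonzero quadratic residue modulo every prime $p \leq y$, where $y := c_{k} \log X / \log \log X$ for a small constant $c_{k} > 0$. Under this congruence constraint $\chi_{K_j}(p) = 1$ for all such $p$, and a standard analytic input comparing $L(1, \chi_{K_j})$ with its short Euler product (after discarding a thin exceptional set of characters with Siegel-type zeros) gives
\[
L(1, \chi_{K_j}) \;\gg\; \prod_{p \leq y}\Bigl(1 - \tfrac{1}{p}\Bigr)^{-1} \;\gg\; \log y \;\gg\; \log \log d.
\]

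\emph{Counting.} A Weil-type estimate on the joint character sum $\sum_{n \bmod p}\prod_{j=1}^{k}\bigl(1 + \bigl(\tfrac{n^{2}+j}{p}\bigr)\bigr)$ shows that the proportion of residues $n \bmod p$ for which every $n^{2} + j$ is a nonzero square mod $p$ is $2^{-k}(1 + O_{k}(p^{-1/2}))$, uniformly for $p \gg_{k} 1$. Assembling across primes $p \leq y$ via CRT and Mertens produces a density $\gg 2^{-k \pi(y)} = X^{-o(1)}$ in $[\sqrt{X}/2, \sqrt{X}]$, so the count survives at $\gg X^{1/2-o(1)}$ after intersecting with the arithmetic progression $L_{k} \mid 2n$ (constant density) and sieving out the $n$ for which some $n^{2}+j$ has a square factor exceeding $n^{1/10}$ (a sparse exceptional set).

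\emph{Main obstacle.} The chief difficulty is coordinating all $k$ conditions simultaneously. The unit requirement reduces to the single divisibility $L_{k} \mid 2n$ and is essentially free, but the $L$-value condition imposes $k\pi(y)$ congruences that must remain jointly consistent with positive density at every prime; the Weil bound secures this uniformly in $p$, and the choice $y \asymp \log X / \log \log X$ must be delicately tuned to balance the density loss $2^{-k \pi(y)} = X^{-o(1)}$ against the gain $\log y \asymp \log \log d$ in the lower bound for $L(1, \chi_{K_j})$. A secondary technical point is justifying the comparison of $L(1, \chi_{K_j})$ with its truncated Euler product at such a short cutoff, which forces a careful treatment of the rare exceptional characters but is handled by classical analytic methods.
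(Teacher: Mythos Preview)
The paper does not prove this theorem at all. Theorem~\ref{imrn-thm} is merely quoted from Cherubini, Fazzari, Granville, Kala and Yatsyna \cite{imrn}; the present survey immediately passes to a much weaker statement, Theorem~\ref{main-thm}, and proves \emph{that} by elementary means: choose disjoint sets of primes $\{p_{i,1},\ldots,p_{i,n+2}\}$ for $i=1,\ldots,k$, use the Chinese remainder theorem to force $p_{i,j}\,\|\,(d+i)$ for every $j$, and then invoke Gauss genus theory to conclude $2^{n}\mid h(d+i)$. This gives arbitrarily large class numbers in the qualitative sense (and infinitely many such tuples by iteration), but nothing close to the quantitative bound $h\gg \sqrt{d}\,\log\log d/\log d$ or the count $X^{1/2-o(1)}$.

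Your sketch, by contrast, is a faithful outline of the actual argument in \cite{imrn}: parametrise $d=n^{2}$ so that $\eta_{j}^{2}/j$ furnishes an explicit unit and pins $\log\epsilon_{K_{j}}\ll\log d$, then force $n^{2}+j$ to be a quadratic residue modulo all primes $p\le y\asymp \log X/\log\log X$ so that the truncated Euler product bounds $L(1,\chi_{K_{j}})\gg\log\log d$, and count admissible $n$ via Weil and CRT. That is the right architecture, and the ingredients you name (the divisibility $L_{k}\mid 2n$ to make $\eta_{j}^{2}/j$ integral, the Weil estimate for the joint density $2^{-k}(1+O_{k}(p^{-1/2}))$, the handling of possible exceptional characters) are exactly the ones required. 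So your proposal is essentially correct for the result being cited, but it cannot be compared with the paper's own proof, because there is none: the survey explicitly settles for the elementary genus-theory substitute and attributes the full theorem to \cite{imrn}.
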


Here we consider a weaker version of Theorem \ref{imrn-thm} and prove the following result, using elementary means. For the sake of notational brevity, we use the standard notation $h(d)$ to denote the class number of the quadratic field $\mathbb{Q}(\sqrt{d})$.

\begin{theorem}\label{main-thm}
Let $k \geq 1$ be an integer and let $M$ be a positive real number. Then there exist $k$ consecutive distinct real quadratic fields $\mathbb{Q}(\sqrt{d + 1}),\ldots,\mathbb{Q}(\sqrt{d + k})$ such that $h(d + i) > M$ for all $i = 1,\ldots,k$.
\end{theorem}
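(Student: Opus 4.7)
The plan is to combine Hilbert's formula (Proposition \ref{hilb}) with a Chinese Remainder Theorem construction. Since $Po(\mathbb{Q}(\sqrt{m}))$ is a subgroup of $Cl_{\mathbb{Q}(\sqrt{m})}$, one has $h(m) \geq |Po(\mathbb{Q}(\sqrt{m}))| \geq 2^{r_{K} - 2}$, where $r_{K}$ is the number of ramified primes. So it suffices to produce $d$ for which each of $d+1, \ldots, d+k$ has at least $N$ distinct primes in its squarefree part, where $N$ is fixed once and for all by $2^{N-2} > M$.

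To build such $d$, I would first fix $kN$ distinct odd primes $p_j^{(i)}$ (indexed by $1 \leq i \leq k$ and $1 \leq j \leq N$), all strictly larger than $k$. Applying the Chinese Remainder Theorem over the pairwise coprime moduli $(p_j^{(i)})^{2}$, I would select a large positive integer $d$ satisfying
\[
d + i \equiv p_j^{(i)} \pmod{(p_j^{(i)})^{2}} \quad \text{for every } i, j.
\]
This congruence forces $p_j^{(i)}$ to divide $d+i$ exactly once, so $p_j^{(i)}$ lies in the squarefree part of $d+i$. Moreover, for $i' \neq i$ one has $d + i' \equiv i' - i \pmod{p_j^{(i)}}$, and since $|i'-i| \leq k < p_j^{(i)}$, the prime $p_j^{(i)}$ does not divide $d + i'$.

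From these facts three conclusions follow. First, $d + i$ is not a perfect square (its $p_1^{(i)}$-adic valuation equals $1$), so $\mathbb{Q}(\sqrt{d+i})$ is a genuine real quadratic field. Second, each prime $p_j^{(i)}$ belongs to the squarefree part of $d+i$ alone, so the squarefree parts of $d+1, \ldots, d+k$ are pairwise distinct, which makes the $k$ fields $\mathbb{Q}(\sqrt{d+i})$ mutually distinct. Third, the $N$ odd primes $p_1^{(i)}, \ldots, p_N^{(i)}$ all ramify in $\mathbb{Q}(\sqrt{d+i})/\mathbb{Q}$, giving $r_{K} \geq N$. Applying Proposition \ref{hilb} then yields $h(d+i) \geq |Po(\mathbb{Q}(\sqrt{d+i}))| \geq 2^{N-2} > M$ for every $i = 1, \ldots, k$, which is the desired conclusion.

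No step here presents a serious obstacle; the argument is essentially a Hilbert--CRT exercise once Proposition \ref{hilb} is available. The only delicate points, both handled above, are guaranteeing that the squarefree parts are pairwise distinct (ensured by insisting the auxiliary primes exceed $k$, so that the shifts $i'-i$ are nonzero modulo each $p_j^{(i)}$) and ensuring that each $d+i$ is non-square (forced by the exact divisibility $p_1^{(i)} \parallel d+i$).
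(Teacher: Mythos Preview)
Your proof is correct and follows essentially the same approach as the paper: a Chinese Remainder Theorem construction with auxiliary primes larger than $k$ to force exact divisibility $p_j^{(i)} \parallel d+i$, then a lower bound on $h(d+i)$ from the count of ramified primes. The only cosmetic difference is that the paper invokes Gauss's genus theory directly (using $n+2$ primes to get $2^n \mid h(d+i)$), whereas you route the same bound through Proposition~\ref{hilb} and the inclusion $Po(K) \subseteq Cl_K$; the two are equivalent here.
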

\begin{proof}
We choose an integer $n \geq 1$ such that $2^{n} > M$. Our aim is to produce real quadratic fields whose discriminants have at least $n + 2$ prime factors.

\smallskip

We choose $k$ pairwise disjoint sets of prime numbers $\{p_{1,1},\ldots,p_{1,n + 2}\}$, $\ldots$, $\{p_{k,1},\ldots,p_{k,n + 2}\}$ such that $p_{i,j} > k$ for all $i,j$ with $1 \leq i \leq k$ and $1 \leq j \leq n + 2$. We consider the following system of congruence. 
\begin{equation}\label{eqn-main}
X \equiv -i + p_{i,j} \pmod{p_{i,j}^{2}} ~ \mbox{ for } 1 \leq i \leq k ~ \mbox{ and }1 \leq j \leq n + 2.
\end{equation}
By the Chinese remainder theorem, there exists a unique integer $d$ modulo $\displaystyle\prod_{\substack{1 \leq i \leq k \\ 1 \leq j \leq n + 2}} p_{i,j}^{2}$, satisfying the congruence conditions in \eqref{eqn-main}. Therefore, for each such $i$, we have that $d + i \equiv 0 \pmod {p_{i,j}}$ but $d + i \not\equiv 0 \pmod{p_{i,j}^{2}}$ for all $j \in \{1,\ldots,n + 2\}$. Hence each $d + i$ has at least $n + 2$ odd prime factors that appear with exponent $1$ in the prime factorization. Consequently, by Gauss theory of genera, $h(d + i)$ is divisible by $2^{n}$ for each $i \in \{1,\ldots,k\}$. Since $2^{n} > M$ , we conclude that $h(d + i) > M$ for all $i \in \{1,\ldots,k\}$.

\smallskip

Moreover, since $p_{i,j} > k$ for all $i$ and $j$, we see that $d + i^{\prime} \not\equiv 0 \pmod{p_{i,j}}$ for $i^{\prime} \neq i$ because that would imply $i \equiv i^{\prime} \pmod{p_{i,j}}$, which is impossible because $0 < |i - i^{\prime}| < k < p_{i,j}$. Consequently, the rational primes $p_{i,j}$'s for $1 \leq j \leq n + 2$, are ramified in $\mathbb{Q}(\sqrt{d + i})$ and not in $\mathbb{Q}(\sqrt{d + i^{\prime}})$ for any $i^{\prime} \neq i$. Thus the quadratic fields $\mathbb{Q}(\sqrt{d + 1}),\ldots,\mathbb{Q}(\sqrt{d + k}))$ are all distinct. This completes the proof of the theorem.
\end{proof}

\medskip

\begin{rmk}
We note that, in fact, there are infinitely many $k$-tuples of consecutive real quadratic fields $\{\mathbb{Q}(\sqrt{d + 1}),\ldots,\mathbb{Q}(\sqrt{d + k})\}$ such that $h(d + i) > M$ for all $i \in \{1,\ldots,k\}$. Indeed, by Theorem \ref{main-thm}, we first choose one $k$-tuple $\{\mathbb{Q}(\sqrt{d + 1}),\ldots,\mathbb{Q}(\sqrt{d + k})\}$ of consecutive real quadratic fields with $h(d + i) > M$. Let $h := \displaystyle\max_{1 \leq i \leq k}\{h(d + i)\}$. Then $h > M$ and again applying Theorem \ref{main-thm} for $h$ in place of $M$, we obtain another $k$ consecutive real quadratic fields $\{\mathbb{Q}(\sqrt{d^{\prime} + 1}),\ldots,\mathbb{Q}(\sqrt{d^{\prime} + k})\}$ with class numbers of all of them bigger than $h$. Clearly, $\mathbb{Q}(\sqrt{d + i}) \neq \mathbb{Q}(\sqrt{d^{\prime} + j})$ for any $i$ and $j$ because $h(d + i) \leq h$ for all $i$ and $h(d^{\prime} + j) > h$ for all $j$. We can continue this process to obtain infinitely many such $k$-tuples of consecutive real quadratic fields, all of which have class numbers bigger than $M$.
\end{rmk}

We can even consider the ``multiplicative" version of Theorem \ref{main-thm} and have the following theorem.

\medskip

\begin{theorem}\label{mult}
For any integer $k \geq 1$ and any real number $M > 0$, there exist infinitely many square-free integers $d > 0$ with $h(jd) > M$ for all $j = 1,\ldots,k$.
\end{theorem}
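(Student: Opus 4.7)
The plan is to mirror the proof of Theorem~\ref{main-thm}, but in a simpler form that does not even require the Chinese remainder theorem, again invoking Gauss's theory of genera. Given $k \geq 1$ and a real number $M > 0$, first choose an integer $n \geq 1$ with $2^{n} > M$, and then select $n+2$ pairwise distinct odd primes $p_{1}, \ldots, p_{n+2}$, each strictly greater than $k$. Setting $d := p_{1} p_{2} \cdots p_{n+2}$ produces a positive square-free integer with $\gcd(j,d) = 1$ for every $j \in \{1, \ldots, k\}$.

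Next, for each such $j$, I would write $j = s_{j}^{2} t_{j}$ with $t_{j}$ the square-free part of $j$. Since $\gcd(t_{j}, d) = 1$, we have $\mathbb{Q}(\sqrt{jd}) = \mathbb{Q}(\sqrt{t_{j} d})$, and $t_{j} d$ is itself square-free and divisible by each of $p_{1}, \ldots, p_{n+2}$. Hence the discriminant of $\mathbb{Q}(\sqrt{jd})$ has at least $n+2$ distinct prime divisors, and Gauss's theory of genera then forces $2^{n}$ to divide $h(jd)$. Consequently $h(jd) \geq 2^{n} > M$ for every $j \in \{1, \ldots, k\}$, as required.

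The infinitude is then automatic by iteration: at each subsequent stage, one substitutes a new prime exceeding both $k$ and every previously used prime for one of the $p_{i}$, producing a strictly larger value of $d$ than any previously constructed one. Alternatively, and in the spirit of the remark immediately preceding the theorem, one may simply reapply the construction with the current $\max_{j} h(jd)$ in place of $M$.

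The only delicate point is the precise genus-theoretic bound in the real quadratic setting: if $t$ denotes the number of distinct prime divisors of the discriminant of $\mathbb{Q}(\sqrt{m})$, the $2$-rank of the narrow class group equals $t - 1$, whereas that of the ordinary class group is $t - 1$ or $t - 2$ according as the fundamental unit has norm $-1$ or $+1$. Choosing $n+2$ (rather than $n+1$) primes is exactly what absorbs this potential loss of one in the $2$-rank; beyond this small bookkeeping point, no genuine obstacle is expected.
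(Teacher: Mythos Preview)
Your proposal is correct and follows essentially the same approach as the paper: take $d$ to be the product of $n+2$ primes all exceeding $k$ and invoke Gauss's genus theory to force $2^{n}\mid h(jd)$ for each $j\le k$. Your write-up is in fact somewhat more careful than the paper's---you spell out the square-free part argument, justify the choice of $n+2$ rather than $n+1$ primes via the norm of the fundamental unit, and address the infinitude explicitly---but the underlying idea is identical.
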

\begin{proof}
Let $t \geq 1$ be an integer such that $2^{t} > M$. Let $p_{1},\ldots,p_{t + 2}$ be prime numbers, all bigger than $k$ and let $d = \displaystyle\prod_{i = 1}^{t + 2}p_{i}$. Then by Gauss theory of genera, we have $2^{t}$ divides each integer $h(d),h(2d),\ldots,h(kd)$. Also, the the quadratic fields $\mathbb{Q}(\sqrt{d}),\ldots,\mathbb{Q}(\sqrt{kd})$ are distinct and $2^{t} > M$, the result follows.
\end{proof}

Clearly, Proposition \ref{hilb} tells us that there exist quadratic fields with large P\'{o}lya groups. In the spirit of Theorem \ref{main-thm}, we prove the following theorem for consecutive real quadratic fields with large P\'{o}lya groups.

\begin{theorem}
Let $k \geq 1$ be an integer and let $M$ be a positive real number. Then there exist quadratic fields, real or imaginary, $K_{i} := \mathbb{Q}(\sqrt{d + i})$ for $i = 1,\ldots,k$ such that $|Po(K_{i})| > M$ for each $i$.
\end{theorem}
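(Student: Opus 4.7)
The plan is to mimic the argument of Theorem \ref{main-thm} almost verbatim, replacing the appeal to Gauss's theory of genera by Hilbert's formula (Proposition \ref{hilb}). The crucial uniform consequence of Proposition \ref{hilb} is the lower bound $|Po(\mathbb{Q}(\sqrt{m}))| \geq 2^{r - 2}$, where $r$ is the number of rational primes ramifying in $\mathbb{Q}(\sqrt{m})/\mathbb{Q}$; this holds regardless of whether $m$ is positive or negative and regardless of the norm of the fundamental unit. Thus it suffices to manufacture $d$ so that each of $d+1,\ldots,d+k$ is divisible by many distinct primes to odd multiplicity.

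First I would fix an integer $n$ with $2^{n-2} > M$. Then, exactly as in the proof of Theorem \ref{main-thm}, I would choose $k$ pairwise disjoint sets of odd primes $\{p_{i,1},\ldots,p_{i,n}\}$, $i=1,\ldots,k$, with every $p_{i,j} > k$. By the Chinese remainder theorem, there is an integer $d$ satisfying
\begin{equation*}
d \equiv -i + p_{i,j} \pmod{p_{i,j}^{2}}, \qquad 1 \leq i \leq k, \ 1 \leq j \leq n,
\end{equation*}
so that $p_{i,j}$ divides $d+i$ exactly to the first power. Consequently the square-free part of $d+i$ has at least $n$ distinct odd prime divisors, and in particular $d+i$ is neither zero nor a perfect square, so $K_{i} := \mathbb{Q}(\sqrt{d+i})$ is a genuine quadratic field. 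Each $p_{i,j}$ ramifies in $K_{i}/\mathbb{Q}$, giving $r_{K_{i}} \geq n$.

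Proposition \ref{hilb} then yields $|Po(K_{i})| \geq 2^{r_{K_{i}} - 2} \geq 2^{n-2} > M$ for every $i \in \{1,\ldots,k\}$, which is the desired inequality. Distinctness of the $K_{i}$ follows exactly as in the proof of Theorem \ref{main-thm}: the condition $p_{i,j} > k$ prevents $p_{i,j}$ from dividing $d+i'$ for any $i' \neq i$ in $\{1,\ldots,k\}$, so each $K_{i}$ has a ramified prime that is unramified in all $K_{i'}$ with $i' \neq i$.

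I do not anticipate any real obstacle, since the argument is essentially a transcription of Theorem \ref{main-thm}; the one bookkeeping item is the ``$-2$'' in Hilbert's formula, which is why I inflate $n$ to satisfy $2^{n-2} > M$ rather than $2^{n} > M$. If one additionally wants all the $K_{i}$ to be real quadratic fields (as in Theorem \ref{main-thm}), it suffices to take the CRT representative $d > 0$ large enough that every $d+i$ is positive; the statement of the present theorem, however, permits either sign.
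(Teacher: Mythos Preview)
Your argument is correct and follows essentially the same route as the paper: reproduce the CRT construction from Theorem \ref{main-thm} and feed the resulting lower bound on $r_{K_i}$ into Proposition \ref{hilb}. The only cosmetic difference is that the paper additionally requires each $p_{i,j}\equiv 3\pmod 4$, which forces $N_{K_i/\mathbb{Q}}(\varepsilon_{K_i})=1$ and hence lands in the case $|Po(K_i)|=2^{r_{K_i}-2}$ of Proposition \ref{hilb}; you instead invoke the uniform lower bound $|Po(K)|\ge 2^{r_K-2}$ valid in both cases, which spares you the congruence condition at no real cost.
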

\begin{proof}
We furnish the proof for the case of real quadratic fields as the case for imaginary quadratic fields is similar. Following the proof of Theorem \ref{main-thm}, we choose the primes $p_{i,j}$'s such that $p_{i,j} \equiv 3 \pmod {4}$ for all $i \in \{1,\ldots,k\}$ and $j \in \{1,\ldots,n + 2\}$. Then $N_{K/\mathbb{Q}}(\varepsilon_{K_{i}}) = 1$. Therefore, each $K_{i}$ has at least $n + 2$ ramified primes in $K_{i}/\mathbb{Q}$. Thus by Proposition \ref{hilb}, we conclude that $|Po(K_{i})| \geq 2^{n} > M$ for each $i$. 
\end{proof}

Following the same line of argument as used in Theorem \ref{mult}, we also obtain the following theorem.

\begin{theorem}\label{mult-pol}
For any integer $k \geq 1$ and any real number $M > 0$, there exist quadratic fields, real or imaginary, $K_{j} := \mathbb{Q}(\sqrt{jd})$ for $i = \{1,\ldots,k\}$ such that $|Po(K_{j})| > M$ for each $j$.
\end{theorem}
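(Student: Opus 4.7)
My plan is to adapt the construction used in Theorem~\ref{mult} by imposing the same mod-$4$ condition on the ramifying primes that was used in the preceding theorem on consecutive real quadratic fields with large P\'{o}lya groups, so that Proposition~\ref{hilb} applies and delivers a large P\'{o}lya group for each $K_{j}$ simultaneously.

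Concretely, I would first choose an integer $n \geq 1$ with $2^{n} > M$ and then select $n+2$ distinct prime numbers $p_{1},\ldots,p_{n+2}$, all congruent to $3 \pmod{4}$ and all strictly greater than $k$. Setting $d := p_{1}p_{2}\cdots p_{n+2}$, the condition $p_{i} > k \geq j$ forces $\gcd(j,d) = 1$ for every $j \in \{1,\ldots,k\}$, so every $p_{i}$ lies in the square-free kernel of $jd$ and therefore ramifies in $K_{j} = \mathbb{Q}(\sqrt{jd})$. Counting just these primes yields $r_{K_{j}} \geq n+2$.

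It then remains to verify that $N_{K_{j}/\mathbb{Q}}(\varepsilon_{K_{j}}) = 1$ for each $j$, which follows from the standard congruence argument: any solution of $a^{2} - jd\,b^{2} = -1$ would make $-1$ a square modulo every odd prime divisor of $jd$, contradicting the choice $p_{i} \equiv 3 \pmod{4}$. Proposition~\ref{hilb} then gives $|Po(K_{j})| = 2^{r_{K_{j}} - 2} \geq 2^{n} > M$, settling the real case. The imaginary case is parallel, working with $-d$ in place of $d$ and invoking the alternative branch $|Po(K)| = 2^{r_{K} - 1}$ of Proposition~\ref{hilb}. I do not anticipate any serious obstacle: the two points that actually do the work are the inequality $p_{i} > k$ (so that each $p_{i}$ survives to the first power in every $jd$ and hence remains ramified in $K_{j}$) and the congruence $p_{i} \equiv 3 \pmod{4}$ (which kills any fundamental unit of norm $-1$ and unlocks the sharper exponent $r_{K_{j}} - 2$ in Proposition~\ref{hilb}).
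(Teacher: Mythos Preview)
Your proposal is correct and follows essentially the same route as the paper: the paper explicitly says this theorem is obtained ``following the same line of argument as used in Theorem~\ref{mult}'', i.e., the multiplicative construction $d = p_{1}\cdots p_{n+2}$ with all $p_{i} > k$, combined with the congruence condition $p_{i} \equiv 3 \pmod{4}$ already introduced in the preceding theorem to force $N_{K_{j}/\mathbb{Q}}(\varepsilon_{K_{j}}) = 1$ and unlock Proposition~\ref{hilb}. Your justification of the norm condition and your handling of the imaginary case are more explicit than the paper's one-line reference, but the underlying argument is identical.
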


\medskip

\section{Concluding remarks}

It can be observed that the distribution of the class numbers are quite erratic along consecutive integers. It may be an interesting question to investigate the {\it closeness} of the class numbers of consecutive real quadratic fields. More precisely, we ask the following question.

\begin{quest}\label{closeness-question}
What is the value of $\lim\inf |h(d + 1) - h(d)|$ as $d \to \infty$?
\end{quest}

It would be quite fascinating to have infinitely many pairs of consecutive real quadratic fields with equal class numbers. In that case, the answer to Question \ref{closeness-question} would be $0$.

\smallskip

By abuse of notation, we denote the order of $Po(\mathbb{Q}(\sqrt{d}))$ by $Po(d)$ and ask the following question in the light of Question \ref{closeness-question}.

\begin{quest}\label{q2}
What is the value of $\lim\inf |Po(d + 1) - Po(d)|$ as $d \to \infty$?
\end{quest}

If there exist infinitely many pairs of consecutive integers with the same number of prime factors appearing in odd exponents, then by using Proposition \ref{hilb} we would be able to conclude that the answer to Question \ref{q2} is also $0$. For example, the pairs of integers $(21,22)$, $(33,34)$, $(34,35)$, $(38,39)$ and possibly more are admissible choice for such consecutive pairs. In view of this, we ask the following question, an affirmative answer to which will resolve Question \ref{q2}.

\begin{quest}
Let $k \geq 1$ be an integer. Are there infinitely many pairs of consecutive integers such that their square-free parts have exactly $k$ distinct prime divisors?
\end{quest}

\begin{rmk}
For $k = 1$, it is related to Fermat primes. For example, if $2^{2^{n}} + 1$ is a Fermat prime, then $(2^{2^{n}},2^{2^{n}} + 1)$ is such a pair. Since conjecturally it is expected that there are possibly finitely many Fermat primes, therefore examples of such kind is also expected to be finite in  number.
\end{rmk}

%Since $Po(d)$ is always a power of $2$, we can ask a slight variant of Question \ref{q2} as follows.
%
%\begin{quest}
%Can the quantity $\dfrac{Po(d + 1)}{Po(d)}$ get arbitrarily large or arbitrarily close to $0$ as $d \to \infty$? 
%\end{quest}

{\bf Acknowledgements.} We are immensely thankful to Prof. Andrew Granville for his insightful comments on the contents of Section 5 and Section 6. The first author gratefully acknowledges the National Board of Higher Mathematics (NBHM) for the Post-Doctoral Fellowship (Order No. 0204/16(12)/2020/R \& D-II/10925). The second author thanks MATRICS, SERB for the research grant MTR/2020/000467.

\end{document}